\tikzstyle arrowstyle=[scale=1]
\tikzstyle directed=[postaction={decorate,decoration={markings,
    mark=at position .65 with {\arrow[arrowstyle]{stealth}}}}]
\tikzstyle reverse directed=[postaction={decorate,decoration={markings,
    mark=at position .65 with {\arrowreversed[arrowstyle]{stealth};}}}]
\definecolor{mygreen}{RGB}{25,127,25}
\newtheorem{theorem}{Theorem}[section]
\newtheorem{proposition}[theorem]{Proposition}
\newtheorem{corollary}[theorem]{Corollary}
\newtheorem{lemma}[theorem]{Lemma}
\theoremstyle{definition}
\newtheorem{definition}{Definition}[section]
\newtheorem{remark}{Remark}[section]
\newtheorem{assumption}{Assumption}
\newtheorem*{thm*}{Theorem}
\newtheorem*{lem*}{Lemma}
\newtheorem*{prop*}{Proposition}
\newcommand{\mbbR}{{\mathbb R}}
\newcommand{\mbbN}{{\mathbb N}}
\newcommand{\mbbZ}{{\mathbb Z}}
\newcommand{\mbbC}{{\mathbb C}}
\newcommand{\mcB}{\mathcal{B}}
\newcommand{\mcE}{\mathcal{E}}
\newcommand{\mcF}{\mathcal{F}}
\def\mbbS{\mathbb{S}}
\def\mcA{\mathcal{A}}
\def\mcJ{\mathcal{J}}
\def\mcN{\mathcal{N}}
\newcommand{\mcH}{\mathcal{H}}
\def\sym{\operatorname{sym}}
\def\asym{\operatorname{sym}^\bot}
\def\id{\operatorname{id}}
\def\Pr{\operatorname{P}\!}
\numberwithin{equation}{section}
\title{Explicit formulas for heat kernels on diamond fractals}
\author{Patricia Alonso Ruiz}
\address{Department of Mathematics, University of Connecticut, Storrs, CT 06269}
\email{patricia.alonso-ruiz@uconn.edu}
\subjclass[2010]{35K08; 60J60; 81Q35; 35C10; 31C25; 28A80}
\keywords{heat kernel; diffusion process; inverse limit space; metric measure graphs; fractals}
\thanks{This research was partly supported by the NSF grant DMS-1613025 and the Feodor-Lynen Fellowship program from the Alexander von Humboldt Foundation.}
\begin{document}
\begin{abstract}
This paper provides explicit pointwise formulas for the heat kernel on compact metric measure spaces that belong to a $(\mbbN\times\mbbN)$-parameter family of fractals which are regarded as projective limits of metric measure graphs and do not satisfy the volume doubling property. The formulas are applied to obtain uniform continuity estimates of the heat kernel and to derive an expression of the fundamental solution of the free Schr\"odinger equation. The results also open up the possibility to approach infinite dimensional spaces based on this model.
\end{abstract}
\maketitle

\section{Introduction}\label{intro}
Many questions that arise in the study of the heat diffusion are closely related to the behavior of its associated heat kernel. This function, in the standard model of the Euclidean space $\mbbR^d$, is given by the Gaussian kernel 
\begin{equation}\label{E:Gaussian}
p_t(x,y)=\frac{1}{(4\pi t)^{d/2}}e^{-\frac{|x-y|^2}{4t}},%\qquad t>0,~x,y\in\mbbR^d,
\end{equation}
which is the fundamental solution of the heat equation $\frac{\partial}{\partial t}u(t,x)=-\Delta u(t,x)$. Here, $\Delta$ denotes the standard Laplace operator on $\mbbR^d$. Any solution of the latter equation can thus be written as
\begin{equation}\label{E:solHeat}
u(t,x)=\int_{\mbbR^d} p_t(x,y)u_0(y)\,dy
\end{equation}
for any appropriate initial condition $u_0$. In general, from a probabilistic viewpoint, $p_t(x,y)$ is the transition density function a diffusion process with respect to some, often canonical, measure on the underlying space. As such, a heat kernel need not exist, and in spaces modeling disoredered media like fractals, this question is subject of extensive work~\cite{Kus85,HK03,Gri06,Kig12}. Even in more classical settings, the existence of the heat kernel is obtained in a fairly abstract way, where sometimes the kernel can be expressed in integral form; see e.g.~\cite{LP03,AG16,DEM16}. Providing a formula like~\eqref{E:Gaussian} or any other useful explicit expression is often a very difficult, if not an impossible task.

Both heat kernels and structures with fractal properties are present in the study of many physical phenomena. Among others, the former appear in the canonical partition function from statistical mechanics. In connection to quantum mechanical models on fractals~\cite{Str09,FKS09}, heat kernels are also used to define suitable Coulomb potentials. We refer to~\cite{RS11,Dun12,ADL13} and references therein for further applications and hints of fractal-like features in quantum mechanics, quantum gravity and wave propagation.

The present paper aims to provide explicit formulas for heat kernels on a family of compact metric measure spaces that arise as the scaling limit of \textit{generalized diamond hierarchical lattices}. These complicated spaces may lack many standard features such as differentiability or the volume doubling property~\eqref{E:doubling} and yet reveal a favorable intrinsic structure that will allow us to overcome these difficulties. 

Diamond hierarchical lattices have since long been studied in the physics literature in relation to statistical mechanics models~\cite{BO79}, spin systems and random polymers~\cite{KG82,KG84}, as well as Ising and Potts models~\cite{BZ88,CD89}. The interest in these lattices lies in the fact that their structure is richer than trees and simpler but in some sense closer to $\mbbZ^d$, making them useful to get an intuition for understanding physical models in $\mbbZ^d$. For instance it is known that the Ising model is solvable on them~\cite{Yan88}. Recent investigations in the context of random polymers, Ising and Potts models can be found in~\cite{HOF15,AC16,GM17}.

A generalized diamond lattice is characterized by two sequences of parameters $\mcJ=\{j_i\}_{i\geq 0}$ and $\mcN=\{n_i\}_{i\geq 0}$ that describe its branching properties; see Definition~\ref{D:DF.PL01}. The construction follows a recursive pattern as indicated in Figure~\ref{F:Intro01} and it gives rise to a sequence of graphs that converges after proper scaling to a compact subset of $\mbbR^2$. We call this set the \textit{diamond fractal} with parameters $\mcJ$ and $\mcN$. 
In the case when $j_i=j$ and $n_i=n$ for some fixed $j,n\geq 2$ and all $i\geq 1$, spectral properties of these spaces have been investigated in~\cite{ADT09}. Their spectral dimension, which in the fractal setting determines the leading term in the small time expansion of the heat kernel, follows the same known formula for so-called post critically finite sets~\cite[Theorem 2.4]{KL93} despite of lacking that property. Moreover, diamond fractals of this particular class are isospectral to fractal strings~\cite{ST12,LvF13}.

In the special case $\!j_i\!=n_i\!=\!2$, the spectrum of the Laplacian on the corresponding diamond fractal was described in~\cite{B+08}. Further properties of the diffusion process, heat kernel estimates and related functional inequalities such as Poincar\'e and elliptic Harnack inequalities were investigated in~\cite{HK10} while studying diffusion on the scaling limit of the critical percolation cluster. Some relevant results obtained there are summarized in the last section of this paper.

In contrast to the approach taken in previous investigations, one of the novelties of this paper is to regard a diamond fractal as the \textit{inverse}/\textit{projective limit} $(F_\infty,\{\Phi_i\}_{i\geq 0})$ of a system of metric measure graphs $\{(F_i,\{\phi_k\}_{k\leq i})\}_{i\geq 0}$. This will be key to obtain explicit formulas for the heat kernel. Inverse limit spaces have been studied from an analytic-geometric point of view by Cheeger-Kleiner~\cite{CK13,CK15} and more recently, results concerning gradient estimates for the heat kernel in this setting have been achieved in~\cite{CJKS17}. However, diamond fractals do not fit in those frameworks because, due to their branching rate, they do not fulfill the \textit{volume doubling} assumption
\begin{equation}\label{E:doubling}
\mu_\infty(B_{2R}(x))\leq C\mu_\infty(B_R(x)).
\end{equation}
To tackle this problem, we follow a more direct approach: firstly, a careful analysis of the diffusion process on each $F_i$ via \textit{cable systems/quantum graphs}~\cite{Var85,BB04,Kuc04} will allow us to obtain an explicit formula for each associated heat kernel $p^{F_i}_t(x,y)$, c.f.\ Theorem~\ref{T:MR02}. Secondly, a general construction argument for Markov processes in the inverse limit setting~\cite{ES01,BE04,Ste13} will lead to an expression of the heat kernel on $F_\infty$, c.f.\ Theorem~\ref{thm:MR03}.

Besides being interesting on their own, we would like to point out three main implications of these formulas that may open up the possibility to approach some questions involving heat kernels in this and related structures. 

As stated in Corollary~\ref{C:MR01}, Theorem~\ref{T:MR02} yields for each fixed $t>0$ and $x,y\in F_\infty$ the uniform bound
\begin{equation*}
|p_t^{F_i}(\Phi_i(x),\Phi_i(y))-p_t^{F_{i-1}}(\Phi_{i-1}(x),\Phi_{i-1}(y))|\leq N_iJ_i\big(1+(J_i^2t)^{-1}\big)e^{-J_i^2t},
\end{equation*}
where $N_i,J_i\to\infty$ as $i\to\infty$. For suitable $N_i$ and $J_i$, this implies the joint continuity of the heat kernel on $F_\infty$, c.f.\ Remark~\ref{R:HSD01}. In addition, this type of bound may be useful to study the spectral zeta function of the Laplacian~\cite{DGV08,CTT17} and functional inequalities such as \textit{Bakry-\'Emery} gradient estimates in the context of limits of metric graphs~\cite{BK17}.

The second application appears in Corollary~\ref{C:MR02} and it refers to the connection between the heat and the Schr\"odinger operators through the so-called \textit{Wick's rotation method}. While this method offers little help with estimates, it can turn to be very useful when exact formulas are available: if $u(t,x)$ is a function as in~\eqref{E:solHeat}, performing the change of variables $t\mapsto {\rm i}t$, which is the essence of Wick's rotation, we obtain a function $\tilde{u}(t,x):=u({\rm i}t,x)$ that satisfies
\begin{equation}\label{E:Schroedinger}
\frac{\partial}{\partial t}\tilde{u}(t,x)=-{\rm i}\Delta \tilde{u}(t,x).
\end{equation}
Setting $\psi_t(x,y)\!:=p_{{\rm i} t}(x,y)$ leads to the fundamental solution of the \textit{free Schr\"odin\-ger equation}~\eqref{E:Schroedinger}. In this manner, Theorem~\ref{thm:MR03} provides access to investigate  further related questions of interest in mathematical physics in the context of fractals~\cite{DABK83,Jum09,Akk13,CMT15}. %In the long run, one would hope to gain some insight into related questions such as how to approach the concept of path integral formulation of quantum mechanics~\cite{AH76,Sim79,JL00,FH10} in this setting.

The last observation is meant to motivate future research with a longer horizon and it is based on the fact that  all results obtained in this paper hold for quite general sequences $\mcJ$ and $\mcN$. In the particular case when $j_i=j$ and $n_i=n$ for some fixed $j,n\geq 2$ and all $i\geq 1$, the Hausdorff dimension of the diamond fractal can readily be seen to equal
\begin{equation*}\label{E:d_H}
\dim_H F_\infty=\frac{\log nj}{\log j}=1+\frac{\log n}{\log j}.
\end{equation*}
Roughly speaking, allowing $j$ and $n$ to grow so that $\frac{\log n}{\log j}$ diverges, one could obtain an infinite dimensional space approximated by a sequence of diamond fractals with these parameters. This idea resembles the approximation techniques used to approach the classical path integral formulation of quantum mechanics~\cite{AH76,JL00,AGHH12}.

The paper is organized as follows: Section~\ref{sec:1} describes the basic metric-measure aspects of diamond fractals, defining them as projective limits of a specific inverse limit system and introducing a cell structure to treat them analytically. The main results of the paper are stated in Section~\ref{sec:2}, c.f.\ Theorem~\ref{T:MR02} and Theorem~\ref{thm:MR03}, where the explicit formulas for the heat kernels are provided. The remaining sections are devoted to the proof of these results. Section~\ref{sec:3} gives several characterizations of the function spaces involved and an orthogonal decomposition based on average projections that is key in the subsequent proofs.
 Section~\ref{sec:4} studies the semigroup determined by the kernel function from Theorem~\ref{T:MR02}, and 
Section~\ref{sec:5} does the corresponding analysis for Theorem~\ref{thm:MR03}. In this last section we also take a closer look at the standard $2$-$2$ diamond fractal; we show that the present construction coincides with previous approaches and review some properties of the Dirichlet form and the heat kernel that are known in the literature for this particular case.

\section{Diamond fractals as inverse limits}\label{sec:1}
In this section we set up notation and explain how diamond fractals can be regarded as inverse/projective limit spaces of metric measure graphs. Diamond fractals do not satisfy the volume doubling condition and are therefore not covered by the framework investigated in~\cite{CK13,CK15}.  We start by recalling the general definition of inverse/projective limit systems of measure spaces, and afterwards discuss the particular case of diamond fractals.

\subsection{Inverse limits of measure spaces}
For further details concerning the abstract theory of inverse limits we refer to~\cite[Section 2-14]{HY61}.
\begin{definition}\label{def:Pre01}
Let $\{(F_i,\mu_i)\}_{i\geq 0}$ be a family of measure spaces and let $\{\phi_{ik}\}_{k\leq i}$ be a family of mappings such that
\begin{itemize}[leftmargin=.25 in]
\item[(a)] $\phi_{ik}\colon F_i\to F_k$ is measurable,
\item[(b)] for any $k\leq j\leq i$, $\phi_{jk}{\circ}\phi_{ij}=\phi_{ik}$ and $\phi_{ii}=\id$,
\item[(c)] for any $k\leq i$, $\mu_k=\mu_i{\circ}\phi_{ik}^{-1}$.
\end{itemize}
The collection $\{(F_i,\mu_i,\{\phi_{ik}\}_{k\leq i})\}_{i\geq 0}$ is called an inverse limit (or projective) system of measure spaces relative to $\{\phi_{ki}\}_{k\leq i}$.
\end{definition}
A more general definition of a projective system of measure spaces can be found e.g. in~\cite[2-14]{HY61}.
\begin{definition}\label{def:Pre02}
The inverse (or projective) limit space of $\{(F_i,\mu_i,\{\phi_{ik}\}_{k\leq i})\}_{i\geq 1}$ is defined as $(F_\infty,\mu_\infty,\{\Phi_i\}_{i\geq 0})$, where 
\begin{itemize}[leftmargin=.25in]
\item[(a)] $F_\infty:=\lim_{\leftarrow}F_i:=\{\{x_i\}_{i\geq 1}~|~x_i\in F_i\text{ and }\phi_{ik}(x_i)=x_k~\forall~k<i\}$,
\item[(b)] $\Phi_i\colon F_\infty\to F_i$ is given by $\Phi_i(x):=x_i$,
\item[(c)] $\mu_\infty(\Phi_i^{-1}(A))=\mu_i(A)$ for any $\mu_i$-measurable $A\subseteq F_i$.
\end{itemize}
\end{definition}
The space $F_\infty$ is Hausdorff, locally compact and second countable.
\begin{remark}\label{rem:Pre01}
By definition, $\phi_{ik}{\circ}\Phi_i=\Phi_k$ for any $0\leq k<i<\infty$.
\end{remark}
\subsection{Diamond fractals}\label{S:DF}
Regular diamond fractals and lattices earned their name from of the original construction as aggregations of ``deformed'' square lattices (diamonds) with multiple branches. As metric measure graphs, one may as well consider the branches in the lattice to be arcs, turning the first diamond into a circle as shown in Figure~\ref{F:Intro01}. This new parametrization happens to be especially useful to derive and prove formulas because of the available knowledge about the heat kernel on a circle; see e.g.~\cite[Section 2.5]{BGL14}.

A family of diamond fractals will be characterized by the sequences $\mcJ=\{j_i\}_{i\geq 0}$ and $\mcN=\{n_i\}_{i\geq 0}$. At stage $i$, each branch from the previous stage has got $j_i$ bonds and between them there are $n_i$ new branches. 

\begin{figure}[H]
\centering
\begin{tabular}{ccc}
\begin{tikzpicture}
\node at ($(135:1.75)$) {$F_0$};
\node at ($(225:1.75)$) {\color{white}$B_1$};
\foreach \a in {0,180}{
\draw ($(\a:1)$) to[out=90+\a,in=0+\a] ($(90+\a:1)$) to[out=180+\a,in=90+\a] ($(180+\a:1)$);
\fill[color= blue] ($(\a:1)$) circle (1pt);
}
\end{tikzpicture}
&
\begin{tikzpicture}
\node at ($(135:1.75)$) {$F_1$};
\node at ($(225:1.75)$) {\color{white}$B_2$};
\foreach \a in {0,60,120,180,240,300}{
\draw ($(\a:1)$) to[out=90-55+\a,in=15+\a] ($(60+\a:1)$);
\draw ($(\a:1)$) to[out=90+\a,in=330+\a] ($(60+\a:1)$);
\draw ($(\a:1)$) to[out=90+65+\a,in=-90+5+\a] ($(60+\a:1)$);
\fill[color= blue] ($(\a+60:1)$) circle (1pt);
}
\end{tikzpicture}
&
\begin{tikzpicture}
\node at ($(135:1.75)$) {$F_2$};
\node at ($(225:1.75)$) {\color{white}$B_3$};
\foreach \a in {0,60,120,180,240,300}{
\draw ($(\a:1)$) to[out=30+\a,in=340+\a] ($(30+\a:1.15)$);
\draw ($(\a:1)$) to[out=50+\a,in=320+\a] ($(30+\a:1.15)$);
\draw ($(\a:1)$) to[out=70+\a,in=300+\a] ($(30+\a:1.15)$);
\draw ($(\a:1)$) to[out=\a-30,in=\a-340] ($(\a-30:1.15)$);
\draw ($(\a:1)$) to[out=\a-50,in=\a-320] ($(\a-30:1.15)$);
\draw ($(\a:1)$) to[out=\a-70,in=\a-300] ($(\a-30:1.15)$);
}
\foreach \a in {0,60,120,180,240,300}{
\draw ($(\a:1)$) to[out=70+\a,in=320+\a] ($(30+\a:1)$);
\draw ($(\a:1)$) to[out=90+\a,in=300+\a] ($(30+\a:1)$);
\draw ($(\a:1)$) to[out=110+\a,in=280+\a] ($(30+\a:1)$);
\draw ($(\a:1)$) to[out=\a-70,in=\a-320] ($(\a-30:1)$);
\draw ($(\a:1)$) to[out=\a-90,in=\a-300] ($(\a-30:1)$);
\draw ($(\a:1)$) to[out=\a-110,in=\a-280] ($(\a-30:1)$);
}
\foreach \a in {0,60,120,180,240,300}{
\draw ($(\a:1)$) to[out=110+\a,in=320+\a] ($(30+\a:.725)$);
\draw ($(\a:1)$) to[out=130+\a,in=300+\a] ($(30+\a:.725)$);
\draw ($(\a:1)$) to[out=150+\a,in=280+\a] ($(30+\a:.725)$);
\draw ($(\a:1)$) to[out=\a-110,in=\a-320] ($(\a-30:.725)$);
\draw ($(\a:1)$) to[out=\a-130,in=\a-300] ($(\a-30:.725)$);
\draw ($(\a:1)$) to[out=\a-150,in=\a-280] ($(\a-30:.725)$);
\fill[color= blue] ($(\a+60:1)$) circle (1pt);
\fill[color= blue] ($(30+\a:1)$) circle (1pt);
\fill[color= blue] ($(30+\a:1.15)$) circle (1pt);
\fill[color= blue] ($(30+\a:0.725)$) circle (1pt);
}
\end{tikzpicture}
\end{tabular}
\vspace*{-1em}
\caption{First 3 levels of the construction of the diamond fractal of parameters $j_1=3$, $n_1=3$, and $j_2=2$, $n_2=3$.}
\label{F:Intro01}
\end{figure}
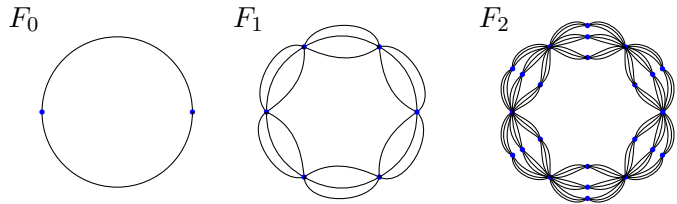
\subsubsection{Projective limit}
Let $\mbbS^1$ denote the one-dimensional sphere in the complex plane which we identify with the interval $[0,2\pi)$ via the mapping $\varphi_{\text{\o}}(\theta)=e^{\rm{i}\theta}$. For given parameter sequences $\mcJ$ and $\mcN$ we construct next the inverse limit system whose inverse limit space corresponds to a diamond fractal with these parameters. We start by introducing some notation. 
\begin{definition}\label{D:DF.PL01}
Set $j_0=n_0=1$ and $J_0=N_0=1$. For any sequences $\mcJ=\{j_\ell\}_{\ell\geq 0}$ and $\mcN=\{n_\ell\}_{\ell\geq 0}$, define
\begin{equation*}
J_i:=\prod_{\ell=1}^ij_\ell,\qquad\qquad N_i:=\prod_{\ell=1}^in_\ell\qquad\text{and}\qquad[n_i]=\{1,\ldots,n_i\}
\end{equation*}
for each $i\geq 1 $.
\end{definition}

We will assume that $j_\ell,n_\ell\geq 2$ for any $\ell\geq 1$. The spaces $F_i$ that build the system are constructed inductively, see Figure~\ref{F:DF.PL01} for a more visual description.

\begin{definition}\label{def:DF.PL01}
Let $\vartheta_0:=\{0,\pi\}$ and $\vartheta_i:=\big\{\frac{\pi k}{J_i}~|~0< k<2J_i,~k\hspace*{-.5em}\mod j_i\not\equiv 0\big\}$ for each $i\geq 1$. Moreover, let $B_0:=\vartheta_0$, $B_1=B_0\cup\vartheta_1$, and for each $i\geq 2$ define
\begin{equation*}\label{E:B_i}
B_i:=B_{i-1}\cup(\vartheta_i\times[n_1]\times\ldots\times[n_{i-1}]).
\end{equation*}
Further, let $F_0=\mbbS^1$ and for each $i\geq 1$ define $F_i:=F_{i-1}\times[n_i]/_\sim$, where $xw\stackrel{i}{\sim} x'w'$ if and only if $x,x'\in B_i$.
\end{definition}
The set $B_i$ contains the identification (branching, junction, vertex) points that yield $F_i$ and $B_i\subseteq F_{i-1}$. We will identify any point $x\in F_i$ with a word $\eta w_1\ldots w_\ell\in F_0\times[n_1]\times\ldots\times[n_\ell]$, $0<\ell\leq i$, in such a way that if $x\in F_i\setminus B_i$, then $x=\eta w_1\ldots w_i$ with $\eta\in F_0\setminus \cup_{k=0}^i\vartheta_k$, whereas if $x\in B_\ell\setminus B_{\ell-1}$, then $x=\eta w_1\ldots w_{\ell-1}$ and $\eta\in\vartheta_\ell$.

\begin{figure}[H]
\begin{tabular}{ccccc}
\begin{tikzpicture}
\node at ($(135:1.75)$) {$F_0$};
\node at ($(225:1.75)$) {\color{blue}$B_0$};
\foreach \a in {0,180}{
\draw ($(\a:1)$) to[out=90+\a,in=0+\a] ($(90+\a:1)$) to[out=180+\a,in=90+\a] ($(180+\a:1)$);
\fill[color= blue] ($(\a:1)$) circle (1.5pt);
}
\end{tikzpicture}
&
\hspace*{-2em}
\begin{tikzpicture}
\node at ($(135:1.75)$) {\color{white}$F_0\times \{1,2,3\}$};
\node at ($(225:1.75)$) {\color{white}$B_1$};
\node at ($(190:1)$) {$\longrightarrow$};
\end{tikzpicture}
&
\hspace*{-2em}
\begin{tikzpicture}
\node at ($(135:1.75)$) {$F_0\times [3]$};
\node at ($(225:1.75)$) {\color{white}$B_1$};
\draw ($(90:.5)+(180:.5)$) ellipse (3em and .5em);
\draw ($(180:.5)$) ellipse (3em and .5em);
\draw ($(270:.5)+(180:.5)$) ellipse (3em and .5em);
\end{tikzpicture}
&
\hspace*{-2em}
\begin{tikzpicture}
\node at ($(135:1.75)$) {\color{white}$F_0\times [3]$};
\node at ($(225:1.75)$) {\color{white}$B_1$};
\node at ($(180:1)$) {$\xrightarrow{\quad\stackrel{1}{\sim}\quad}$};
\end{tikzpicture}
&
\hspace*{-2em}
\begin{tikzpicture}
\node at ($(135:1.75)$) {$F_1$};
\node at ($(225:1.75)$) {\color{blue}$B_1$};
\foreach \a in {0,60,120,180,240,300}{
\draw ($(\a:1)$) to[out=90-55+\a,in=15+\a] ($(60+\a:1)$);
\draw ($(\a:1)$) to[out=90+\a,in=330+\a] ($(60+\a:1)$);
\draw ($(\a:1)$) to[out=90+65+\a,in=-90+5+\a] ($(60+\a:1)$);
\fill[color= blue] ($(\a+60:1)$) circle (1.5pt);
}
\end{tikzpicture}
\end{tabular}
\caption{Construction of $F_1$ from $F_0$ with $j_1=3$, $n_1=3$ and $B_1=\{\frac{\pi k}{3}~|~0\leq k<6\}$.}
\label{F:DF.PL01}
\end{figure}
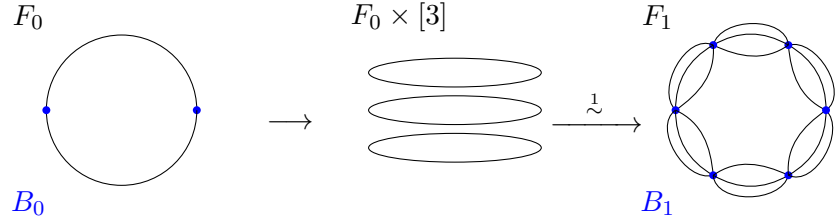
By means of the latter identification, a suitable family of mappings $\{\phi_{ik}\}_{k\leq i}$ can be defined as a ``shortening'' of words, where each $\eta w_1\ldots w_\ell\in F_i$ with $k<\ell\leq i$ becomes a word of length $k$, while words of length $\ell\leq k$ remain unchanged.
\begin{definition}\label{D:DF.PL02}
For any $0\leq k<i<\infty$, define the mapping $\phi_{ik}\colon F_i\to F_k$ as
\begin{equation*}\label{E:defphi_i}
\phi_{ik}(\eta w_1\ldots w_\ell)=
\begin{cases}
\eta w_1\ldots w_k&\text{if }k<\ell\leq i,\\
\eta w_1\ldots w_\ell&\text{if }0\leq\ell\leq k.
\end{cases}
\end{equation*}
\end{definition}

In particular, setting $\phi_i:=\phi_{i(i-1)}\colon F_i\to F_{i-1}$ we have that
\begin{equation*}\label{eq:DF.PL02}
\begin{cases}
\phi_i(x)=x&\text{if }x\in B_i,\\
\phi_i(xw)=x&\text{if }xw\in F_i\setminus B_i,
\end{cases}
\end{equation*}
and for each $0\leq k<i<\infty$, $\phi_{ik}=\phi_{k+1}{\circ}\cdots{\circ}\phi_i$. 

Together with a suitable family of measures $\{\mu_i\}_{i\geq 0}$ introduced in Definition~\ref{def:DF.CS03}, the family $\{(F_i,\mu_i,\{\phi_{ik}\}_{k\leq i})\}_{i\geq 0}$ defines a projective system of measure spaces with a limit as displayed in Figure~\ref{F:ProjLim}, where $\psi_i\colon F_{i-1}\times[n_i]\to F_{i-1}$ is the projection $\psi_i(xw)=x$, and the mapping $\pi_i\colon F_{i-1}\times[n_i]\to F_i$ is given by
\begin{equation*}\label{eq:DF.PL01}
\pi_i(xw)=\begin{cases}
xw&\text{if }x\in F_{i-1}\setminus B_i,\\
x&\text{if }x\in B_i.
\end{cases}
\end{equation*} 

\begin{definition}\label{def:DF03}
The inverse limit of the projective system $\{(F_i,\mu_i,\{\phi_{ik}\}_{k\leq i})\}_{i\geq 0}$ is called the diamond fractal with parameter sequences $\mcJ$ and $\mcN$.
\end{definition}

\begin{figure}[H]
\begin{tikzpicture}
\node (sp1) at ($(0:0)$) {$\vdots$};
\node (sp2) at ($(0:3.75)$) {$\vdots$};
\node (lim) at ($(0:6)$) {$F_\infty$};
\node (i) at ($(270:1)$) {$F_{i-1}\times[n_i]$};
\node (Fi) at ($(270:1)+(0:3.75)$) {$F_i$};
\node (im1) at ($(270:2.75)$) {$F_{i-2}\times[n_{i-1}]$};
\node (Fim1) at ($(270:2.75)+(0:3.75)$) {$F_{i-1}$};
\node (sp3) at ($(270:4)$) {$\vdots$};
\node (sp4) at ($(270:4)+(0:3.75)$) {$\vdots$};

\draw[->] (lim) -- node[above] {\small{$\Phi_i$}} (Fi);
\draw[->] (lim) -- node[right] {\small{$\Phi_{i-1}$}} (Fim1);
\draw[->] (i) -- node[above] {\small{$\pi_i$}} (Fi);
\draw[->] (im1) -- node[above] {\small{$\pi_{i-1}$}} (Fim1);
\draw[->] (i) -- node[above right] {\small{$\psi_i$}} (Fim1);
\draw[->] (Fi) -- node[left] {\small{$\phi_i$}} (Fim1) ;
\draw[->] (Fim1) -- node[left] {\small{$\phi_{i-1}$}} (sp4);
\end{tikzpicture}
\caption{Projective system for a diamond fractal.}
\label{F:ProjLim}
\end{figure}
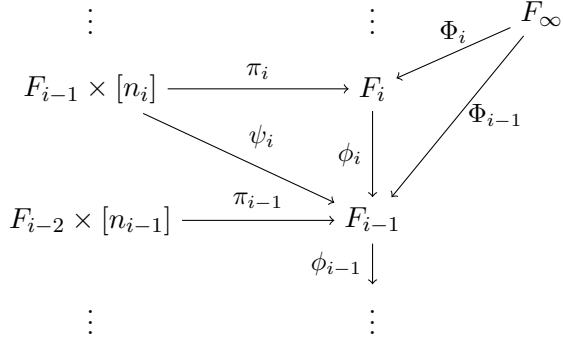
We denote the limit space by $(F_\infty,\mu_\infty,\{\Phi_i\}_{i\geq 0})$ and only refer to the parameters $\mcJ,\mcN$ if necessary.

\subsubsection{Cell structure}
In order to study functions and describe a diffusion process on $F_\infty$, it will be useful to see each approximating space $F_i$ as a union of $i$-cells isomorphic to intervals of length $\pi/J_i$, glued in an appropriate manner. These cells correspond to quantum graph edges and the cell structure fits into the more general framework of~\cite{Tep08}. Moreover, it induces a natural measure $\mu_i$ on each $F_i$, c.\ f.\ Definition~\ref{def:DF.CS03}, that makes $\{(F_i,\mu_i,\{\phi_{ik}\}_{k\leq i})\}_{i\geq 0}$ a projective system in accordance with Definition~\ref{def:Pre01}.

To set up this construction, recall the notation from Definition~\ref{def:DF.PL01}.
\begin{definition}\label{def:DF.CS01}
Let $\mcA_0:=\{\text{\o}\}$ and for each $i\geq 1$ define
\begin{equation*}\label{E:A_i}
\mcA_i:=\cup_{\ell=0}^i\vartheta_\ell\times[n_1]\times\ldots\times[n_i].
\end{equation*}
For any $i\geq 1$, let $L_i:=\pi /J_i$, and for each $\alpha\in\mcA_i$ define $\varphi_\alpha\colon [0,L_i]\to I_\alpha\subseteq F_i$ to be an isomorphism such that $\varphi_\alpha(0)$ and $\varphi_\alpha(L_i)$ are the endpoints of $I_\alpha:=\varphi_\alpha([0,L_i])$. We call $I_\alpha$ an $i$-cell of $F_i$. 
\end{definition}
For the sake of completeness, in the case $i=0$ we put $\varphi_{\text{\o}}(\theta)=e^{\rm{i}\theta}$, $L_0=2\pi$ and $I_{\text{\o}}=F_0=\mbbS^1$.
\begin{remark}\label{R:DF.CS01}
\begin{itemize}[leftmargin=1.5em, wide=0em]
\item[(i)] Notice that, for each $i\geq 1$, the set $\cup_{\ell=0}^i\vartheta_\ell$ has $2J_i$ elements and hence $\mcA_i$ consists of $2J_iN_i$ words of length $i+1$.
\item[(ii)] The endpoints $\varphi_\alpha(0)$ and $\varphi_\alpha(L_i)$ of a cell $I_\alpha$ correspond to identification points of $F_i$ and thus belong to $B_i$. We will establish the convention that a cell ``starts'' at $\varphi_\alpha(0)$ and travels counter-clockwise as indicated in Figure~\ref{F:DF.CS01}. In this way, each junction point $x\in B_i$ has $n_i$ cells starting and $n_i$ cells ending at it.
\end{itemize}
\end{remark}

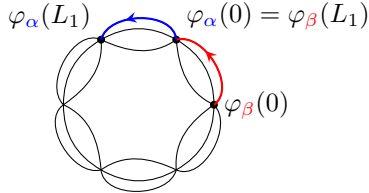
\begin{figure}[H]
\centering
\begin{tikzpicture}
\coordinate[label=above left:{\small{$\varphi_{{\color{blue}\alpha}}(L_1)$}}] (x) at ($(120:1)$);
\fill (x) circle (1.5pt);
\coordinate[label=above right:{\small{$\varphi_{{\color{blue}\alpha}}(0)=\varphi_{\color{red}\beta}(L_1)$}}] (z) at ($(60:1)$) ;
\fill (z) circle (1.5pt);
\coordinate[label=right:{\small{$\varphi_{\color{red}\beta}(0)$}}] (y) at ($(0:1)$);
\fill (y) circle (1.5pt);
\foreach \a in {0,60,120,180,240,300}{
\draw ($(\a:1)$) to[out=90-55+\a,in=15+\a] ($(60+\a:1)$);
\draw ($(\a:1)$) to[out=90+\a,in=330+\a] ($(60+\a:1)$);
\draw ($(\a:1)$) to[out=90+65+\a,in=-90+5+\a] ($(60+\a:1)$);
}
\draw[red, thick,directed] ($(0:1)$) to[out=90-55,in=15] ($(60:1)$); 
\draw[blue, thick, directed] ($(60:1)$) to[out=90-55+60,in=15+60] ($(120:1)$);
\end{tikzpicture}
\caption{In blue the cell $I_\alpha$, in red $I_{\beta}$, $n_1=3$.}
\label{F:DF.CS01}
\end{figure}

In the next definition, we associate each point in $F_i$ with the $i$-cell it belongs to and its position in it.
\begin{definition}\label{def:DF.CS02}
For each $i\geq 0$ and $x\in F_i$, define the pair $(\theta_x,\alpha_x)\in [0,L_i)\times\mcA_i$ to be such that
\begin{equation*}\label{eq:DF.CS01}
\varphi_{\alpha_x}(\theta_x)=x.
%\theta_x:=\begin{cases}
%0&\text{if }x\in B_{i-1},\\
%L_i&\text{if }x\in B_i\setminus B_{i-1},\\
%\varphi^{-1}_{\alpha_x}(x)&\text{if }x\in F_i\setminus B_i.
%\end{cases}
\end{equation*}
For each $x\in F_i$, the set that indexes the branches which belong to the same ``bundle'' as $I_{\alpha_x}$ is denoted by
\begin{equation*}\label{eq:DF.CS02}
\mcA_x:=\{\alpha\in \mcA_i~|~\varphi_\alpha(0)=\varphi_{\alpha_x}(0)\}.
\end{equation*}
\end{definition}

\begin{remark}\label{rem:DF.CS02}
The pair $(\theta_x,\alpha_x)$ is not unique if $x\in B_i$, $i\geq 1$. In that case we adopt the convention that $\alpha_x$ is the smallest in the lexicographic order.
\end{remark}
\begin{figure}[H]
\centering
\begin{subfigure}[b]{.4\textwidth}
\centering
\begin{tikzpicture}
\coordinate[label=right:{\small{$x$}}] (x) at ($(40:1.15)$);
\foreach \a in {0,60,120,180,240,300}{
\draw ($(\a:1)$) to[out=90-55+\a,in=15+\a] ($(60+\a:1)$);
\draw ($(\a:1)$) to[out=90+\a,in=330+\a] ($(60+\a:1)$);
\draw ($(\a:1)$) to[out=90+65+\a,in=-90+5+\a] ($(60+\a:1)$);
}
\draw[red,thick] ($(0:1)$) to[out=90-55,in=15] ($(60:1)$); 
\fill (x) circle (1.5pt);
\end{tikzpicture}
\caption{In red the cell $I_{\alpha_x}$.}
\label{F:DF.CS02a}
\end{subfigure}
\hspace*{.2in}
\begin{subfigure}[b]{.4\textwidth}
\centering
\begin{tikzpicture}
\coordinate[label=right:{\small{$x$}}] (x) at ($(40:1.15)$);
\foreach \a in {0,60,120,180,240,300}{
\draw ($(\a:1)$) to[out=90-55+\a,in=15+\a] ($(60+\a:1)$);
\draw ($(\a:1)$) to[out=90+\a,in=330+\a] ($(60+\a:1)$);
\draw ($(\a:1)$) to[out=90+65+\a,in=-90+5+\a] ($(60+\a:1)$);
}
\draw[red,thick] ($(0:1)$) to[out=90-55,in=15] ($(60:1)$); 
\draw[red,thick] ($(0:1)$) to[out=90,in=330] ($(60:1)$);
\draw[red,thick] ($(0:1)$) to[out=90+65,in=-90+5] ($(60:1)$);
\fill (x) circle (1.5pt);
\end{tikzpicture}
\caption{In red, cells $I_\alpha$ with $\alpha\in\mcA_x$.}
\label{F:DF.CS02b}
\end{subfigure}
\end{figure}

The cell structure naturally induces a measure on each $F_i$, that is obtained by redistributing the mass of each branch in the previous level uniformly between its ``successors''.
\begin{definition}\label{def:DF.CS03}
For each $i\geq 0$ and $\alpha\in\mcA_i$ define the measure $\mu_i$ on $I_\alpha$ as the push forward 
\begin{equation*}\label{eq:DF.CS03}
\int_{I_\alpha}f\,d\mu_i= \frac{1}{N_i}\int_0^{L_i}f{\circ}\varphi_\alpha(\theta)\,d\theta
\end{equation*}
for any measurable function $f\colon I_\alpha\to\mbbR$. On $F_i$, the measure $\mu_i$ is defined as
\begin{equation*}\label{eq:DF.CS04}
\int_{F_i}f\,d\mu_i=\sum_{\alpha\in\mcA_i}\int_{I_\alpha}f\,d\mu_i.
\end{equation*}
\end{definition}
From the latter definition, a direct computation and induction yield the relationship between measures from different levels.
\begin{lemma}\label{lem:DF.CS01}
For any $0\leq k<i<\infty$ and any Borel measurable function $f\colon F_k\to\mbbR$,
\begin{equation*}\label{eq:DF.CS06}
\int_{F_i}f{\circ}\phi_{ik}\,d\mu_i=\int_{F_k}f\,d\mu_k.
\end{equation*}
In particular, for any $i\geq 1$,
\begin{equation*}\label{eq:DF.CS05}
\int_{F_i}f{\circ}\phi_i\,d\mu_i=\int_{F_{i-1}}f\,d\mu_{i-1}
\end{equation*}
for any Borel measurable function $f\colon F_{i-1}\to\mbbR$.
\end{lemma}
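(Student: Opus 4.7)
The plan is to prove the particular case $k=i-1$ directly from the cell structure of Definitions~\ref{def:DF.CS01} and~\ref{def:DF.CS03}, and to deduce the general statement by induction on $i-k$ via the factorization $\phi_{ik}=\phi_{(i-1)k}\circ\phi_i$ recorded after Definition~\ref{D:DF.PL02}.

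For the base case $k=i-1$, the key geometric observation is that $\phi_i$ is compatible with cells. By Definition~\ref{def:DF.PL01}, $F_i$ is obtained by gluing $n_i$ copies of $F_{i-1}$ along $B_i$, and inside any $(i-1)$-cell $I_\alpha$ of length $L_{i-1}=j_iL_i$ the set $B_i$ contributes precisely the two endpoints together with the $j_i-1$ interior junction points $\varphi_\alpha(sL_i)$, $s=1,\dots,j_i-1$, coming from $\vartheta_i$. Hence $I_\alpha$ is subdivided into $j_i$ sub-arcs of length $L_i$, each of which carries $n_i$ parallel copies in $F_i$. This yields exactly $j_i n_i$ children $i$-cells $I_\beta\subseteq\phi_i^{-1}(I_\alpha)$, consistent with $|\mcA_i|/|\mcA_{i-1}|=j_i n_i$ from Remark~\ref{R:DF.CS01}(i). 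The parametrizations $\varphi_\beta$ may be chosen so that to each child is associated an index $s_\beta\in\{0,\dots,j_i-1\}$ with $\phi_i\circ\varphi_\beta(\theta)=\varphi_\alpha(s_\beta L_i+\theta)$ for $\theta\in[0,L_i]$, and each value of $s$ is attained by exactly $n_i$ children. Using Definition~\ref{def:DF.CS03} together with $N_i=n_i N_{i-1}$, this gives
\[
\int_{F_i} f\circ\phi_i\, d\mu_i=\sum_{\alpha\in\mcA_{i-1}}\frac{n_i}{N_i}\sum_{s=0}^{j_i-1}\int_{sL_i}^{(s+1)L_i}\!\! f\circ\varphi_\alpha\, d\theta=\sum_{\alpha\in\mcA_{i-1}}\frac{1}{N_{i-1}}\int_0^{L_{i-1}}\!\! f\circ\varphi_\alpha\, d\theta=\int_{F_{i-1}} f\, d\mu_{i-1},
\]
which establishes the ``in particular'' statement.

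For general $0\leq k<i$, set $g:=f\circ\phi_{(i-1)k}$; this is Borel measurable on $F_{i-1}$ by Definition~\ref{def:Pre01}(a). Applying the case $k=i-1$ to $g$ and then the inductive hypothesis on $i-k$ (with base $i=k+1$ just proved) yields
\[
\int_{F_i} f\circ\phi_{ik}\, d\mu_i=\int_{F_i} g\circ\phi_i\, d\mu_i=\int_{F_{i-1}} f\circ\phi_{(i-1)k}\, d\mu_{i-1}=\int_{F_k} f\, d\mu_k.
\]
The only delicate point is the one-step bookkeeping of the $j_i n_i$ children and their parametrizations; once that is in place the measure identity reduces to a one-line substitution, and iteration delivers the general case.
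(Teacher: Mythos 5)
Your proposal is correct, and it fills in exactly the argument the paper leaves implicit: the paper offers no written proof, stating only that ``a direct computation and induction yield'' the identity, and your one-step cell-by-cell computation (each $(i-1)$-cell splitting into $j_in_i$ children, with the factor $n_i/N_i=1/N_{i-1}$ and the reassembly of $[0,L_{i-1}]$ from the $j_i$ sub-arcs) followed by induction on $i-k$ via $\phi_{ik}=\phi_{(i-1)k}\circ\phi_i$ is precisely that computation. No gaps.
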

The diamond fractal $F_\infty$ is thus equipped with the measure $\mu_\infty$ induced by the family $\{\mu_i\}_{i\geq 0}$ via Definition~\ref{def:Pre02}.
\section{Main results}\label{sec:2}
The natural diffusion process on a diamond fractal $F_\infty$ we want to analyze is based on a sequence of processes on the spaces $F_i$ constructed as described in~\cite{BE04}. In this section we present explicit formulas for the heat kernel of the Markov semigroup associated with both the approximating and the limiting processes.

Let $p_t^{F_0}\colon (0,\infty)\times F_0\times F_0\to\mbbR$ denote the heat kernel of the standard Brownian motion on $\mbbS^1$, i.e.
\begin{equation}\label{E:HKS1}
p_t^{F_0}(\theta_1,\theta_2)=\frac{1}{2\pi}+\frac{1}{\pi}\sum_{k=1}^\infty e^{-k^2 t}\cos(k(\theta_2-\theta_1)).
\end{equation}

Due to the distinguished role it will play in the proofs, we display separately the case $i=1$. Although the parameters $\mcJ=\{j_i\}_{i\geq 0}$ and $\mcN=\{n_i\}_{i\geq 0}$ of the diamond fractal $F_\infty$ are omitted in the notation of the kernel, they will appear inside the formulas.
\begin{theorem}\label{T:MR01}
Let $n_1,j_1\geq 2$. The heat kernel $p_t^{F_1}\colon (0,\infty)\times F_1\times F_1\to\mbbR$ on $F_1$ is given by
\begin{equation*}\label{eq:MR01}
p_t^{F_1}(x,y)=\begin{cases}
p_t^{F_0}(\phi_1(x),\phi_1(y))&\text{if }y=y_1,\\
p_t^{F_0}(\phi_1(x),\phi_1(y))-j_1\big(p^{F_0}_{j_1^2t}(j_1\theta_x,j_1\theta_y)-p^{F_0}_{j_1^2t}(j_1\theta_x,-j_1\theta_y)\big)&\text{if } y=y_2,\\
p_t^{F_0}(\phi_1(x),\phi_1(y))+(n_1-1)j_1\big(p^{F_0}_{j_1^2t}(j_1\theta_x,j_1\theta_y)-p^{F_0}_{j_1^2t}(j_1\theta_x,-j_1\theta_y)\big)&\text{if }y=y_3,
\end{cases}
\end{equation*}
where the possible pair-point configurations $(x,y)\in F_1$ are described in Figure~\ref{fig:MR01}.
\end{theorem}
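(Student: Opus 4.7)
The approach is to exploit the permutation symmetry of the $n_1$ arcs within each bundle to split the heat semigroup on $F_1$ into two invariant pieces and compute the kernel on each. Throughout I will use the coordinates $(\theta_x,\alpha_x)$ from Definition~\ref{def:DF.CS02}. For every bundle I would introduce the average-projection
\[
(Pf)(x) = \frac{1}{n_1}\sum_{\alpha\in\mcA_x} f(\varphi_\alpha(\theta_x)),
\]
along with $Q=I-P$, and invoke the orthogonal splitting $L^2(F_1,\mu_1) = L^2_\sym \oplus L^2_\asym$ to be developed in Section~\ref{sec:3}. The Dirichlet form on $F_1$ (Dirichlet integral on each arc, with continuity and vanishing Kirchhoff sum of derivatives at each junction $z\in B_1$) is invariant under the action that permutes the arcs of any bundle, so $P$ commutes with the heat semigroup and both summands are invariant.

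On the symmetric block, every $f\in L^2_\sym$ has the form $f = g\circ\phi_1$ with $g\in L^2(F_0,\mu_0)$, and Lemma~\ref{lem:DF.CS01} makes this identification an isometry. The $n_1$ one-sided derivatives entering any junction coincide on such $f$, so the Kirchhoff condition collapses to continuity and derivative-matching of $g$ at the image point of $\mbbS^1$; the generator transports to the Laplacian of $\mbbS^1$-Brownian motion, and the symmetric component of the kernel is $p_t^{F_0}(\phi_1(x),\phi_1(y))$ from~\eqref{E:HKS1}.

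On the antisymmetric block, the relation $n_1 f(z) = \sum_{\alpha\in\mcA_z} f(z) = 0$ at every $z\in B_1$ forces $f$ to vanish at every junction, so the heat semigroup on $L^2_\asym$ decouples arc-by-arc into the Dirichlet heat semigroup on $I_\alpha\simeq[0,L_1]$. The affine change $\theta\mapsto j_1\theta$, $t\mapsto j_1^2 t$ sends $[0,L_1]$ onto $[0,\pi]\subset\mbbS^1$, and the method of images applied to~\eqref{E:HKS1} expresses the Dirichlet heat kernel on $[0,L_1]$ (density with respect to Lebesgue) as
\[
q^D_t(\theta,\theta') = j_1\bigl(p^{F_0}_{j_1^2 t}(j_1\theta,j_1\theta') - p^{F_0}_{j_1^2 t}(j_1\theta,-j_1\theta')\bigr).
\]
Within a bundle the antisymmetric projector acts on arc indices by $\delta_{\alpha_x\alpha_y}-1/n_1$, and converting from the Lebesgue density on an arc to a $\mu_1$-density contributes an extra factor $n_1$ because each arc carries $\mu_1$-mass $L_1/n_1$ by Definition~\ref{def:DF.CS03}. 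The combined antisymmetric kernel is therefore $(n_1\delta_{\alpha_x\alpha_y}-1)\,q^D_t(\theta_x,\theta_y)$ for $x$ and $y$ in the same bundle and zero otherwise. Adding the symmetric piece recovers the three cases $y=y_1,y_2,y_3$ drawn in Figure~\ref{fig:MR01}.

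The principal obstacle is the simultaneous bookkeeping of three normalizations: the Jacobian $j_1$ from the rescaling $[0,L_1]\to[0,\pi]$, the factor $n_1$ from converting Lebesgue to $\mu_1$-density on an arc, and the $\pm 1/n_1$ weights from the antisymmetric projector. A concise sanity check is that $\int_{F_1}p_t^{F_1}(x,y)\,d\mu_1(y)=1$ forces the corrections summed across the bundle of $x$ to vanish, which is verified by $1\cdot(n_1-1)j_1 + (n_1-1)\cdot(-j_1)=0$.
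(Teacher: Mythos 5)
Your argument is correct, and the two central structural facts you use are exactly the ones the paper relies on: the orthogonal splitting $L^2(F_1,\mu_1)=L^2_{\sym}\oplus L^2_{\asym}$ with the symmetric block isomorphic to $L^2(F_0,\mu_0)$ (Proposition~\ref{P:FS.FA01}, Corollary~\ref{cor:FS.FA01}) and the identification of the two blocks with the circle semigroup and the arc-by-arc Dirichlet semigroup (Lemma~\ref{L:HSFA01}). Your normalization bookkeeping checks out: the antisymmetric kernel $(n_1\delta_{\alpha_x\alpha_y}-1)\,q^D_t(\theta_x,\theta_y)$ with respect to $\mu_1$ reproduces the $-j_1(\cdots)$ and $(n_1-1)j_1(\cdots)$ corrections, and the method-of-images identity for $q^D_t$ is the paper's equation relating $p_t^{[0,L_1]_D}$ to $p^{F_0}$. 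The genuine difference is the direction of the argument. You start from the quantum-graph Dirichlet form with Kirchhoff conditions, use invariance under the $S(n_1)^{2j_1}$ action to show the averaging projection commutes with the semigroup, diagonalize the generator block by block, and thereby \emph{derive} the formula. The paper never derives it: it posits the kernel, proves Lemma~\ref{L:HSFA01} as an identity for the resulting integral operator, and then \emph{verifies} in Proposition~\ref{P:HSFA01} that this operator family is a strongly continuous conservative Markov semigroup with the strong Feller property, identifying the cable-system Dirichlet form only a posteriori in Remark~\ref{R:HSFA02}. Your route explains where the three cases come from and pins down the process as the canonical one from the start, at the cost of having to justify carefully that the permutation symmetry of the form passes to the semigroup and that the Kirchhoff conditions collapse correctly on each block (you assert both; they do hold). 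The paper's route avoids those generator-domain arguments and directly produces the Feller and strong Feller properties it needs later for the projective-limit construction, which is why it is organized as a verification rather than a derivation.
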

The pair-point configurations correspond to the cases when $x$ and $y$ belong, respectively, to different cells in different ``bundles'' ($y=y_1$), to different cells in the same ``bundle'' ($y=y_2$) and to the same cell ($y=y_3$).
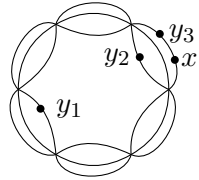
\begin{figure}[H]
\centering
\begin{tikzpicture}
\node (x) at ($(20:1.15)$) {$\quad x$};
\node (y1) at ($(200:0.75)$) {$\qquad y_1$};
\node (y2) at ($(35:0.75)$) {$y_2\;\;\quad$};
\node (y3) at ($(40:1.15)$) {$\quad\;\; y_3$};
\fill (x) circle (1.5pt);
\fill (y1) circle (1.5pt);
\fill (y2) circle (1.5pt);
\fill (y3) circle (1.5pt);
\foreach \a in {0,60,120,180,240,300}{
\draw ($(\a:1)$) to[out=90-55+\a,in=15+\a] ($(60+\a:1)$);
\draw ($(\a:1)$) to[out=90+\a,in=330+\a] ($(60+\a:1)$);
\draw ($(\a:1)$) to[out=90+65+\a,in=-90+5+\a] ($(60+\a:1)$);
}
\end{tikzpicture}
\caption{Pair-point configurations in the first approximation $F_1$ of a diamond fractal with parameters $j_1=3$ and $n_1=3$. }
\label{fig:MR01}
\end{figure}
In general, this classification of pair-point configurations can be spelled in terms of the corresponding word expression: for any $x,y\in F_i$ let us write $x=\eta w_1\ldots w_\ell$, $y=\eta'w'_1\ldots w'_{\ell'}$ for some $1\leq \ell\leq\ell'\leq i$ and $\eta,\eta'\in [0,2\pi)$.
\begin{enumerate}[leftmargin=*]
\item If $x$ and $y$ belong to different bundles ($y=y_1$), then we have the following cases:
\begin{enumerate}[wide=0in]
\item $\ell\neq\ell'$, i.e. $x,y\in B_i$ or $x\in B_i$ and $y\in F_i\setminus B_i$ or vice versa,
\item $\ell=\ell'<i$, i.e. $x,y\in B_i$,
\item $\ell=\ell'=i$, and $\eta\in[\vartheta,\vartheta+L_i)$, $\eta'\in[\vartheta',\vartheta'+L_i)$ for some $\vartheta,\vartheta'\in\cup_{k=0}^{i-1}\vartheta_k$ with $\vartheta\neq\vartheta'$, i.e. $x,y\in F_i\setminus B_{i-1}$,
\item $\ell=\ell'=i$, and $\eta,\eta'\in [\vartheta,\vartheta+L_i)$ for some $\vartheta\in\cup_{k=0}^{i-1}\vartheta_k$ and $w_{i-1}\neq w'_{i-1}$, i.e. $x,y\in F_i\setminus B_{i-1}$.
\end{enumerate}
\item If $x$ and $y$ belong to different strands of the same bundle ($y=y_2$), then $\ell=\ell'=i$, $\eta,\eta'\in [\vartheta,\vartheta+L_i)$ for some $\vartheta\in\cup_{k=0}^{i-1}\vartheta_k$, and $w_\ell= w'_\ell$ for all $1\leq \ell<i$ whereas $w_i\neq w'_i$.
\item If $x$ and $y$ belong to the same strand ($y=y_3$), then $\ell=\ell'=i$, $\eta,\eta'\in [\vartheta,\vartheta+L_i)$ for some $\vartheta\in\cup_{k=0}^{i-1}\vartheta_k$, and $w_\ell= w'_\ell$ for all $1\leq \ell\leq i$.
\end{enumerate}

Theorem~\ref{T:MR01} sets the basis of an inductive argument that will lead to the formula for the heat kernel on a generic approximating space $F_i$.
\begin{theorem}\label{T:MR02}
Let $\mcJ$ and $\mcN$ be parameter sequences. For each $i\geq 1$, the heat kernel on the corresponding $F_i$ is the function $p_t^{F_i}\colon (0,\infty)\times F_i\times F_i\to\mbbR$ given by
\begin{equation*}\label{eq:MR02}
p_t^{F_i}(x,y){=}\begin{cases}
p_t^{F_{i-1}}(\phi_i(x),\phi_i(y))&\text{if }y{=}y_1,\\
p_t^{F_{i-1}}(\phi_i(x),\phi_i(y))-N_{i-1}J_i\big(p^{F_0}_{J_i^{2}t}(J_i\theta_x,J_i\theta_y){-}p^{F_0}_{J_i^{2}t}(J_i\theta_x,-J_i\theta_y)\big)&\text{if }y{=}y_2,\\
p_t^{F_{i-1}}(\phi_i(x),\phi_i(y)){+}N_{i-1}(n_i{-}1)J_i\big(p^{F_0}_{J_i^{2}t}(J_i\theta_x,J_i\theta_y){-}p^{F_0}_{J_i^2t}(J_i\theta_x,{-}J_i\theta_y)\big)&\text{if }y{=}y_3,
\end{cases}
\end{equation*}
with the same pair-point configurations as in Theorem~\ref{T:MR01}.
\end{theorem}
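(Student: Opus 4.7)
The plan is to compute $p^{F_i}_t$ directly at each level $i$ by a spectral/orthogonal--decomposition argument, rather than a genuine induction on $i$; the appearance of $p^{F_{i-1}}_t$ on the right simply reflects that one summand of the decomposition is naturally identified with the semigroup on $F_{i-1}$. Theorem~\ref{T:MR01} serves as the model for the $i=1$ case.

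First I would invoke the orthogonal decomposition $L^2(F_i,\mu_i)=V_i^{\sym}\oplus V_i^{\asym}$ from Section~\ref{sec:3}, where $V_i^{\sym}=\{g\circ\phi_i:g\in L^2(F_{i-1},\mu_{i-1})\}$ and $V_i^{\asym}$ consists of functions whose fiber average over each preimage $\phi_i^{-1}(z)$ vanishes (orthogonality follows from Lemma~\ref{lem:DF.CS01}). The key analytic fact is that the Kirchhoff Laplacian on the cable system $F_i$ preserves this splitting. On $V_i^{\sym}$ it is unitarily conjugate to the Laplacian on $F_{i-1}$, so the heat semigroup restricted there contributes exactly $p^{F_{i-1}}_t(\phi_i(x),\phi_i(y))$ to the full kernel. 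On $V_i^{\asym}$, continuity at a junction of a bundle forces the values along the $n_i$ meeting strands to agree at the endpoints, and the zero--sum condition then forces each to vanish there; hence the semigroup on $V_i^{\asym}$ decouples into Dirichlet Laplacians on intervals $[0,L_i]$, further restricted to the subspace $\{\sum_w f_w\equiv 0\}$ within each bundle.

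Next I would identify the antisymmetric contribution to the kernel by eigenfunction expansion on a fixed bundle. The antisymmetric eigenfunctions are indexed by $k\geq 1$ and $v\in\mbbR^{n_i}$ with $\sum_w v_w=0$; they take the value $v_w\sin(k\pi s/L_i)$ at $\varphi_\alpha(s)$ on strand $w$, with eigenvalue $k^2 J_i^2$. Summing over an orthonormal basis of $\{v:\sum v_w=0\}\subset\mbbR^{n_i}$ produces the coupling factor $\delta_{w_x,w_y}-1/n_i$. Combined with the measure normalization $1/N_i$ from Definition~\ref{def:DF.CS03} and the $2/L_i$ normalization of the sine basis, this yields a net coefficient $N_i(\delta_{w_x,w_y}-1/n_i)$ multiplying the Dirichlet heat kernel on $[0,L_i]$. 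The method of images rewrites that Dirichlet kernel as the difference of two heat kernels on a circle of length $2L_i=2\pi/J_i$, and the rescaling $\theta\mapsto J_i\theta$, $t\mapsto J_i^2 t$ identifies them with $J_i\bigl(p^{F_0}_{J_i^2 t}(J_i\theta_x,J_i\theta_y)-p^{F_0}_{J_i^2 t}(J_i\theta_x,-J_i\theta_y)\bigr)$. The three cases $y=y_1,y_2,y_3$ correspond to the coupling factor being $0$ (no antisymmetric contribution because $x$ and $y$ lie in disjoint bundles), $-1/n_i$, and $(n_i-1)/n_i$ respectively; applying $N_i=n_i N_{i-1}$ produces the stated coefficients $-N_{i-1}$ and $N_{i-1}(n_i-1)$.

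The main obstacle I expect is the careful bookkeeping in the antisymmetric piece: reconciling the $1/N_i$ measure normalization, the $2/L_i$ sine--series normalization, and the Kronecker factor $\delta_{w_x,w_y}-1/n_i$ so that exactly $N_{i-1}$ (not $N_{i-1}/n_i$ or $N_i$) emerges. The conceptual subtlety underneath is verifying that the Kirchhoff matching condition at a junction of total degree $2n_i$ is equivalent, on antisymmetric functions, to Dirichlet boundary conditions on each individual strand, and that this is what makes the symmetric/antisymmetric splitting invariant under the semigroup; this relies on the equal--branching structure of every bundle and is presumably the content of the average--projection lemma announced for Section~\ref{sec:3}.
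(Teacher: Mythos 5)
Your proposal is correct, but it reaches the formula by a genuinely different route than the paper. You \emph{derive} the kernel by diagonalizing the Kirchhoff Laplacian: you split $L^2(F_i,\mu_i)$ into the symmetric part (pulled back from $F_{i-1}$ via $\phi_i^*$, which is an isometry by Lemma~\ref{lem:DF.CS01}) and the fiber-average-zero part, observe that the zero-sum condition within each bundle both forces Dirichlet values at junctions and automatically discharges the Kirchhoff derivative condition (since $\sum_w \partial_n f_w=\partial_n\sum_w f_w=0$), and then expand the antisymmetric kernel in the product basis $\{v\otimes\sin(k\pi\cdot/L_i)\}$ to produce the coupling factor $\delta_{w_x,w_y}-1/n_i$; your bookkeeping $N_i(\delta_{w_x,w_y}-1/n_i)=N_{i-1}(n_i\delta_{w_x,w_y}-1)$ is right and the method-of-images step is exactly the identity the paper records after Theorem~\ref{T:MR02}. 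The paper instead \emph{verifies} the already-stated kernel: Lemma~\ref{L:HSFA01} shows by direct integration that the operator it defines decomposes as $T_t^{F_{i-1}}(\Pr_i f)(\phi_i(x))+T_t^{[0,L_i]_D}(\Pr_i^\bot f)_{\alpha_x}(\theta_x)$ --- the same symmetric/antisymmetric splitting, but applied at the level of semigroups --- and Proposition~\ref{P:HSFA01} then checks the Feller semigroup axioms by induction on $i$, reducing everything to known properties of $T_t^{F_0}$ and $T_t^{[0,L_1]_D}$. What each buys: your spectral route explains where the coefficients come from and ties the kernel directly to the cable-system Dirichlet form~\eqref{E:HSFA14}, something the paper relegates to Remark~\ref{R:HSFA02}; the paper's verification route delivers positivity, contractivity and the strong Feller property explicitly, which are not visible from an eigenfunction expansion and are exactly what the projective-limit construction of Section~\ref{sec:5} consumes. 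If you pursue your version, the one point to make explicit is the self-adjoint decomposition of the generator's \emph{domain} (not just of $L^2$), i.e.\ that the symmetric part of the Kirchhoff domain is precisely $\phi_i^*$ of the $F_{i-1}$ domain and the antisymmetric part is precisely the zero-sum subspace of $\bigoplus_\alpha (H^2\cap H^1_0)([0,L_i])$; both inclusions hold, but they are where the equal-branching structure is actually used.
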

Depending on the specific purpose, the iterative formula of Theorem~\ref{T:MR02} can be rewritten in different ways. Particularly insightful is the following: let $p_t^{[0,L_i]_D}$ denote the heat kernel on the interval $[0,L_i]$ subject to Dirichlet boundary conditions, i.e.
\begin{equation}\label{E:1dHK}
p_t^{[0,L_i]_D}(\theta_1,\theta_2)=\frac{2}{L_i}\sum_{k=1}^\infty e^{-\frac{k^2\pi^2}{L_i^2}t}\sin\Big(\frac{k\pi\theta_1}{L_i}\Big)\sin\Big(\frac{k\pi\theta_2}{L_i}\Big).
\end{equation}
Since $L_i=\pi/J_i$, the classical equality 
\begin{equation*}\label{E:HKHelp}
p_t^{[0,L_i]_D}(\theta_1,\theta_2)=J_ip^{[0,\pi]_D}_{J_i^2t}(J_i\theta_1,J_i\theta_2)=J_i\big(p^{F_0}_{J_i^{2}t}(J_i\theta_1,J_i\theta_2){-}p^{F_0}_{J_i^2t}(J_i\theta_1,{-}J_i\theta_2)\big)%p_t^{[0,\frac{\pi}{J_i}]_D}(\theta_0,\theta)=,
\end{equation*}
yields the alternative formula of the heat kernel
\begin{equation}\label{E:MR02b}
p_t^{F_i}(x,y){=}\begin{cases}
p_t^{F_{i-1}}(\phi_i(x),\phi_i(y))&\text{if }y{=}y_1,\\
p_t^{F_{i-1}}(\phi_i(x),\phi_i(y))-N_{i-1}p_t^{[0,L_i]_D}(\theta_x,\theta_y)&\text{if }y{=}y_2,\\
p_t^{F_{i-1}}(\phi_i(x),\phi_i(y)){+}N_{i-1}(n_i{-}1)p_t^{[0,L_i]_D}(\theta_x,\theta_y)&\text{if }y{=}y_3.
\end{cases}
\end{equation}
Recall that by definition of projective limit, see Definition~\ref{def:Pre02}, any point $x\in F_\infty$ is approximated by a sequence of points $\{\Phi_i(x)\}_{i\geq 1}$, where $\Phi_i(x)\in F_i$. Consequently,~\eqref{E:MR02b} yields the following uniform estimate.
\begin{corollary}\label{C:MR01}
For any $i\geq 1$ and any $(t,x,y)\in (0,\infty)\times F_\infty\times F_\infty$ it holds that
\begin{equation}\label{E:unif estimate}
|p_t^{F_i}(\Phi_i(x),\Phi_i(y))-p_t^{F_{i-1}}(\Phi_{i-1}(x),\Phi_{i-1}(y))|\leq N_iJ_i\big(1+(J_i^2t)^{-1}\big)e^{-J_i^2t}.
\end{equation}
\end{corollary}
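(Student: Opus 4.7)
The plan is to reduce everything to the alternative form \eqref{E:MR02b} of the iterative heat-kernel formula. By Remark~\ref{rem:Pre01}, $\phi_i\circ\Phi_i=\Phi_{i-1}$, so for any $x,y\in F_\infty$ one has $p_t^{F_{i-1}}(\phi_i(\Phi_i(x)),\phi_i(\Phi_i(y)))=p_t^{F_{i-1}}(\Phi_{i-1}(x),\Phi_{i-1}(y))$. Hence, depending on the pair-point configuration of $(\Phi_i(x),\Phi_i(y))$ inside $F_i$, the difference in~\eqref{E:unif estimate} equals $0$ (case $y_1$), $-N_{i-1}\,p_t^{[0,L_i]_D}(\theta_{\Phi_i(x)},\theta_{\Phi_i(y)})$ (case $y_2$), or $N_{i-1}(n_i-1)\,p_t^{[0,L_i]_D}(\theta_{\Phi_i(x)},\theta_{\Phi_i(y)})$ (case $y_3$). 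In all three situations the prefactor is at most $N_{i-1}(n_i-1)\le N_{i-1}n_i=N_i$, so it suffices to prove
\[
|p_t^{[0,L_i]_D}(\theta_1,\theta_2)|\le J_i\bigl(1+(J_i^2t)^{-1}\bigr)e^{-J_i^2t}
\]
uniformly in $\theta_1,\theta_2\in[0,L_i]$.

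For this, I would plug $L_i=\pi/J_i$ into the spectral representation~\eqref{E:1dHK} and take absolute values term by term, using $|\sin|\le 1$, to get
\[
|p_t^{[0,L_i]_D}(\theta_1,\theta_2)|\le \frac{2J_i}{\pi}\sum_{k=1}^{\infty}e^{-k^2J_i^2 t}.
\]
The remaining step is the standard integral-comparison estimate for the theta-type series: setting $a=J_i^2 t$, since $x\mapsto e^{-x^2 a}$ is decreasing on $[1,\infty)$,
\[
\sum_{k=1}^{\infty}e^{-k^2 a}\le e^{-a}+\int_1^{\infty}e^{-x^2 a}\,dx.
\]
The substitution $u=x\sqrt{a}$ together with the Gaussian tail bound $\int_c^\infty e^{-u^2}du\le e^{-c^2}/(2c)$ (with $c=\sqrt{a}$) yields $\int_1^\infty e^{-x^2 a}dx\le e^{-a}/(2a)$, so
\[
\sum_{k=1}^{\infty}e^{-k^2 a}\le e^{-a}\bigl(1+\tfrac{1}{2a}\bigr).
\]

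Combining these two estimates gives $|p_t^{[0,L_i]_D}|\le \frac{2J_i}{\pi}\bigl(1+\tfrac{1}{2}(J_i^2 t)^{-1}\bigr)e^{-J_i^2 t}$, and since $\tfrac{2}{\pi}\bigl(1+\tfrac{1}{2a}\bigr)\le 1+\tfrac{1}{a}$ for every $a>0$, the claimed bound $N_i J_i(1+(J_i^2 t)^{-1})e^{-J_i^2 t}$ follows in all pair-point configurations. The only non-mechanical point, and the one I would double-check, is making sure the three configurations of $(\Phi_i(x),\Phi_i(y))$ in $F_i$ are exhaustive and that the prefactor bound $N_{i-1}(n_i-1)\le N_i$ captures the worst case; the series estimate itself is routine.
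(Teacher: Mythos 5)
Your proof is correct and follows essentially the same route as the paper: reduce via \eqref{E:MR02b} and $\phi_i\circ\Phi_i=\Phi_{i-1}$ to the three pair-point configurations, bound the worst-case prefactor by $N_{i-1}(n_i-1)\le N_i$, and estimate the Dirichlet kernel on $[0,L_i]$ by taking absolute values term by term in its spectral series. The only (immaterial) difference is in the tail estimate: the paper bounds $e^{-k^2J_i^2t}\le e^{-kJ_i^2t}$ and sums the geometric series to get $e^{-J_i^2t}/(1-e^{-J_i^2t})\le\bigl(1+(J_i^2t)^{-1}\bigr)e^{-J_i^2t}$, whereas you use an integral comparison with a Gaussian tail bound; both yield the claimed inequality.
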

\begin{proof}
Applying~\eqref{E:MR02b} we have that
\begin{align*}
|p_t^{F_i}(\Phi_i(x),\Phi_i(y))&-p_t^{F_{i-1}}(\Phi_{i-1}(x),\Phi_{i-1}(y))|\leq  N_{i-1}(n_i-1)p_t^{[0,L_i]_D}(\theta_{\Phi_i(x)},\theta_{\Phi_i(y)})\\
%&= N_{i-1}(n_i-1)\frac{2}{L_i}\sum_{k=1}^\infty e^{-\frac{k^2\pi^2}{L_i^2}t}\sin\Big(\frac{\pi k\theta_{\Phi_i(x)}}{L_i}\Big)\sin\Big(\frac{\pi k\theta_{\Phi_i(y)}}{L_i}\Big)\\
&= N_{i-1}(n_i-1)\frac{2}{\pi}J_i\sum_{k=1}^\infty e^{-k^2J_i^2t}\sin\big(J_i k\theta_{\Phi_i(x)}\big)\sin\big(J_i k\theta_{\Phi_i(y)}\big)\\
&\leq \frac{2}{\pi}N_{i-1}(n_i-1)J_i\sum_{k=1}^\infty e^{-J_i^2kt}
%&\leq \frac{2}{\pi}N_{i-1}(n_i-1)J_i\big(e^{-J_i^t}+\frac{1}{J_i^2t} e^{-J_i^2t}\big)\\
\leq N_iJ_i\big(1+(J_i^2t)^{-1}\big)e^{-J_i^2t}.
\end{align*}
\end{proof}
This estimate is the reason why throughout the paper we will make the following standing assumption.
\begin{assumption}
The sequences $\mcJ$ and $\mcN$ satisfy
\begin{equation}\label{E:Ass_NJ}
\lim_{i\to\infty}N_ie^{-J^2_it}<\infty\qquad\qquad \forall~t\in(0,\varepsilon)
\end{equation}
for some $\varepsilon>0$.
\end{assumption}
We can now define for each fixed $t>0$ the uniform limit
\begin{equation*}\label{E:HKasLimit}
p_t^{F_\infty}(x,y):=\lim_{i\to\infty}p_t^{F_i}(\Phi_i(x),\Phi_i(y))\qquad\qquad x,y\in F_\infty
\end{equation*}
which is uniformly continuous on $F_\infty\times F_\infty$ and jointly continuous on any finite interval $0<t_1\leq t\leq t_2<\infty$, see Remark~\ref{R:HSD01}.
 The pointwise expression of the heat kernel on a diamond fractal $F_\infty$ is obtained by noticing that if $\Phi_i(x)$ and $\Phi_i(y)$ belong to $i$-cells in different bundles, then
\begin{equation*}
p_t^{F_i}(\Phi_i(x),\Phi_i(y))=p_t^{F_{i-1}}\big(\phi_i(\Phi_i(x)),\phi_i(\Phi_i(y))\big)=p_t^{F_{i-1}}(\Phi_{i-1}(x),\Phi_{i-1}(y)),
\end{equation*}
where last equality follows because $\phi_i(\Phi_i(x))=\Phi_{i-1}(x)$ for any $x\in F_\infty$, c.f. Definition~\ref{def:Pre02}.

\begin{remark}\label{R:MR01}
\begin{enumerate}[leftmargin=*,label=(\roman*), align=left, labelsep=0in,wide=0em]
\item The weaker assumption $\displaystyle\lim_{i\to\infty}N_ie^{-J^2_i\varepsilon}<\infty$ for some $\varepsilon>0$ yields the weaker result that $p^{F_\infty}_t(x,y)$ is uniformly continuous for $t\in[\varepsilon,\infty)$.
\item One can give stronger and still rather weak time-independent sufficient conditions that imply~\eqref{E:Ass_NJ}, for instance $\displaystyle \lim_{i\to\infty}N_ie^{-J_i}<\infty$.
\end{enumerate}
\end{remark}
\begin{theorem}\label{thm:MR03}
Let $\mcJ,\mcN$ be parameter sequences that satisfy~\eqref{E:Ass_NJ}. The heat kernel for the associated diamond fractal is the function $p_t^{F_\infty}\colon (0,\infty)\times F_\infty\times F_\infty\to\mbbR$ given by
\begin{equation}\label{eq:MR03}
p_t^{F_\infty}(x,y)=p_t^{F_{i_*}}\big(\Phi_{i_*}(x),\Phi_{i_*}(y)\big),
\end{equation}
where $i_*:=i_*(x,y):=\max_{i\geq 1}\{x\text{ and }y\text{ belong to i-cells in the same bundle}\}$.
\end{theorem}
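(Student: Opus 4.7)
The plan is to exploit the iterative formula~\eqref{E:MR02b} in the ``different bundle'' case $y_1$, where it reduces to
\begin{equation*}
p_t^{F_i}(\Phi_i(x),\Phi_i(y)) = p_t^{F_{i-1}}\bigl(\phi_i(\Phi_i(x)),\phi_i(\Phi_i(y))\bigr) = p_t^{F_{i-1}}(\Phi_{i-1}(x),\Phi_{i-1}(y)),
\end{equation*}
the second equality by Remark~\ref{rem:Pre01}. If one shows that $\Phi_i(x),\Phi_i(y)$ lie in different bundles for every $i>i_*$, this identity telescopes down to level $i_*$, yielding $p_t^{F_i}(\Phi_i(x),\Phi_i(y)) = p_t^{F_{i_*}}(\Phi_{i_*}(x),\Phi_{i_*}(y))$ for all $i\ge i_*$. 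Letting $i\to\infty$ and invoking the uniform limit $p_t^{F_\infty}(x,y):=\lim_i p_t^{F_i}(\Phi_i(x),\Phi_i(y))$ established after Corollary~\ref{C:MR01} then delivers~\eqref{eq:MR03}.

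The substantive step is therefore to verify that $i_*(x,y)$ is finite for $x\neq y$ and that this guarantees case $y_1$ at every larger level. I would argue by tracking how the pair-point configuration evolves under refinement, using the cell structure of Section~\ref{sec:1}. If $\Phi_i(x)$ and $\Phi_i(y)$ lie in different cells of the same $i$-bundle (case $y_2$), the two cells share their endpoints in $B_i$, but at level $i+1$ each is subdivided into $j_{i+1}$ sub-cells by new branch points in $\vartheta_{i+1}\times[n_1]\times\cdots\times[n_i]$ whose $[n_i]$-coordinate distinguishes the parent cells; hence the first sub-cells issuing from the shared start-junction terminate at distinct $(i+1)$-vertices, placing $\Phi_{i+1}(x),\Phi_{i+1}(y)$ in different bundles. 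The analogous reasoning handles $y_3\to y_1$: two points in the same $i$-cell either remain in a common $(i+1)$-sub-cell or split into distinct sub-cells of that parent, and distinct sub-cells have different endpoint pairs, hence lie in different $(i+1)$-bundles. Remaining perpetually in case $y_3$ forces $\Phi_i(x)=\Phi_i(y)$ for all $i$, and since $L_i=\pi/J_i\to 0$ this would force $x=y$; hence for $x\neq y$ the level $i_*$ is finite and the telescoping applies.

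The main obstacle I anticipate is keeping the refinement combinatorics bookkept correctly — particularly that the $j_{i+1}$ sub-cells of a single parent $i$-cell belong pairwise to different $(i+1)$-bundles, which requires unfolding the labeling conventions of Definitions~\ref{def:DF.PL01} and~\ref{def:DF.CS01}. The diagonal case $x=y$ (where $i_*=\infty$) is not covered by the finite-$i_*$ argument and must be treated as a separate limit: one defines $p_t^{F_\infty}(x,x):=\lim_{i\to\infty} p_t^{F_i}(\Phi_i(x),\Phi_i(x))$ and studies its finiteness directly via Theorem~\ref{T:MR02} and Assumption~\eqref{E:Ass_NJ}.
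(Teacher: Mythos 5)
Your telescoping argument reproduces, essentially verbatim, the observation the paper itself makes in the paragraph preceding Theorem~\ref{thm:MR03}: once $\Phi_i(x)$ and $\Phi_i(y)$ lie in $i$-cells in different bundles, case $y_1$ of~\eqref{E:MR02b} together with $\phi_i{\circ}\Phi_i=\Phi_{i-1}$ gives $p_t^{F_i}(\Phi_i(x),\Phi_i(y))=p_t^{F_{i-1}}(\Phi_{i-1}(x),\Phi_{i-1}(y))$, so the sequence stabilizes at level $i_*$. Two remarks on your combinatorics. The clean way to see that the configuration can only degrade under refinement is the implication ``same $(i{+}1)$-bundle $\Rightarrow$ same $i$-cell'': an $(i{+}1)$-bundle sits over a single segment of a single $i$-cell, so the set of levels at which $x$ and $y$ share a bundle is an initial segment of $\mbbN$, finite for $x\neq y$ since $\diam I_\alpha\leq L_i\to 0$. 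By contrast, your claim that two points in the same $i$-cell which split into distinct $(i{+}1)$-sub-cells must land in different $(i{+}1)$-bundles is false: distinct sub-cells lying over the same segment of the parent share both endpoints and form exactly a $y_2$ configuration. This does not damage the argument (only the levels $i>i_*$ matter), but the asserted transition $y_3\to y_1$ is wrong as stated.

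The genuine gap is that you have only identified the pointwise limit with the right-hand side of~\eqref{eq:MR03}; you have not shown that this function \emph{is the heat kernel}, i.e.\ the transition density with respect to $\mu_\infty$ of the canonical diffusion on $F_\infty$. That is the actual content of the theorem and it occupies all of Section~\ref{sec:5}: one defines $T_t^{F_\infty}f=\int_{F_\infty}p_t^{F_\infty}(\cdot,y)f(y)\,\mu_\infty(dy)$, proves the intertwining relations $T_t^{F_i}\phi_{ik}^*=\phi_{ik}^*T_t^{F_k}$ and $T_t^{F_\infty}\Phi_i^*=\Phi_i^*T_t^{F_i}$ (Lemma~\ref{L:HSD01}, which itself rests on the symmetric/antisymmetric decomposition of Lemma~\ref{L:HSFA01}), and deduces from these together with the density of $\bigcup_{i\geq 0}\Phi_i^*C(F_i)$ that $\{T_t^{F_\infty}\}_{t\geq 0}$ is a strongly continuous conservative Markov semigroup with the strong Feller property (Proposition~\ref{thm:HSD01}), hence the transition semigroup of the projective-limit process of~\cite{BE04}. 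None of the semigroup property, Markovianity, strong continuity, or consistency with the projective system follows from the stabilization of the pointwise values alone, so your proposal as written establishes that the formula is well defined, not that it deserves the name in the statement.
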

In particular, one can write a formula for the fundamental solution of the free Schr\"odinger equation on $F_\infty$.
\begin{corollary}\label{C:MR02}
Let $\mcJ,\mcN$ be parameter sequences that satisfy~\eqref{E:Ass_NJ}. The free Schr\"odinger kernel for the associated diamond fractal is the function $\psi^{F_\infty}_t\colon(0,\infty)\times F_\infty\times F_\infty\to\mbbC$ given by
\begin{equation*}\label{E:CorSchroedinger}
\psi_t^{F_\infty}(x,y)=p_{{\rm i}t}^{F_{i_*}}\big(\Phi_{i_*}(x),\Phi_{i_*}(y)\big),
\end{equation*}
where $i_*:=i_*(x,y):=\max_{i\geq 1}\{x\text{ and }y\text{ belong to i-cells in the same bundle}\}$.
\end{corollary}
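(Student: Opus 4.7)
The plan is to obtain the corollary as a direct application of Wick's rotation to the heat kernel formula~\eqref{eq:MR03}. The analytic continuation $t\mapsto {\rm i}t$ carries the heat semigroup $e^{-t\Delta}$ to the unitary Schr\"odinger group $e^{-{\rm i}t\Delta}$, and for the present setting the substitution into the right-hand side of \eqref{eq:MR03} yields
\begin{equation*}
\psi_t^{F_\infty}(x,y)=p_{{\rm i}t}^{F_{i_*}}\bigl(\Phi_{i_*}(x),\Phi_{i_*}(y)\bigr),
\end{equation*}
precisely the claimed formula. Crucially, since $i_*(x,y)$ is finite for every pair $(x,y)\in F_\infty\times F_\infty$, only the heat kernel on the finite approximation $F_{i_*}$ is involved, so no infinite-limit subtlety enters the Wick rotation itself.

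It then remains to verify that $p_{{\rm i}t}^{F_{i_*}}$ is well-defined and serves as the fundamental solution of~\eqref{E:Schroedinger} on $F_{i_*}$. For this I would invoke the spectral structure of the cable/quantum graph $F_{i_*}$ from Section~\ref{sec:1}: the associated Laplacian has discrete spectrum $\{\lambda_k\}_{k\geq 0}$ with an $L^2(F_{i_*})$-orthonormal eigenbasis $\{\phi_k\}$, and Theorem~\ref{T:MR02} together with the expansions \eqref{E:HKS1} and \eqref{E:1dHK} identifies
\begin{equation*}
p_t^{F_{i_*}}(x,y)=\sum_{k\geq 0}e^{-t\lambda_k}\phi_k(x)\overline{\phi_k(y)}.
\end{equation*}
Substituting $t\mapsto{\rm i}t$ gives the spectral expansion of the unitary propagator $e^{-{\rm i}t\Delta_{F_{i_*}}}$; termwise differentiation yields $\partial_t\psi=-{\rm i}\Delta\psi$, and $\psi_t\to\delta$ as $t\to 0^+$ in the $L^2$--distributional sense.

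Finally, to transfer these properties from $F_{i_*}$ to $F_\infty$, I would compose with the projection $\Phi_{i_*}$ and use Remark~\ref{rem:Pre01} together with Lemma~\ref{lem:DF.CS01} to confirm that pull-backs along $\Phi_{i_*}$ preserve both the Laplacian action and the measure pairings defining the distributional Schr\"odinger equation on $F_\infty$. The main obstacle I anticipate is that, once $t$ is purely imaginary, the spectral expansion no longer converges absolutely or uniformly in $(x,y)$: the identity must be interpreted by pairing against smooth test functions, using the rapid growth of $\lambda_k$ and the smoothness of $\phi_k$ on each edge of $F_{i_*}$ to ensure summability. This is the standard framework for Schr\"odinger propagators on compact spaces with discrete spectrum, and it applies verbatim to $F_{i_*}$, which is why the corollary follows with no further ingredients beyond Theorem~\ref{thm:MR03}.
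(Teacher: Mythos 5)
Your proposal is correct and follows exactly the route the paper takes: Corollary~\ref{C:MR02} is presented as an immediate consequence of Theorem~\ref{thm:MR03} via the Wick rotation $t\mapsto{\rm i}t$ described in the introduction, and the paper offers no further proof. Your additional discussion of the spectral expansion on $F_{i_*}$ and of the need to interpret the Wick-rotated (no longer absolutely convergent) series distributionally is a sound refinement of a point the paper leaves implicit, not a departure from its argument.
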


\begin{remark}\label{R:MR02}
Assumption~\eqref{E:Ass_NJ} is rather general and in particular includes unbounded sequences, whose geometric and analytic implications are subject of further research.
\end{remark}
\section{Function spaces}\label{sec:3}
In this section we introduce the function spaces we will be working with. Given any Hausdorff compact metric space $F$ we denote by $\mcB_b(F)$ the set of bounded Borel functions and by $C(F)$ the space of continuous real-valued functions, which in this case coincides with the space of continuous functions vanishing at infinity. Given another compact metric space $F'$ and a measurable function $\xi\colon F\to F'$, we define $\xi^*\colon \mcB_b(F')\to\mcB_b(F)$ by $\xi^*f:=f{\circ}\xi$. By definition of continuity, if $\xi\in C(F)$ and $\xi^{-1}(K)$ is compact for any compact $K\subseteq F'$, then $\xi^*\colon C(F')\to C(F)$.

\subsection{Finite approximations}
For each $i\geq 0$, let $L^2(F_i,\mu_i)$ denote the space of square integrable functions on $F_i$. For any $i\geq 1$, we decompose this space into
\begin{equation*}\label{eq:FS.FA01}
L^2(F_i,\mu_i)=L^2_{\sym}(F_i,\mu_i)\oplus L^2_{\asym}(F_i,\mu_i),
\end{equation*}
where $L^2_{\sym}(F_i,\mu_i)$ denotes the invariant subspace of $L^2(F_i,\mu_i)$ under the action of the symmetric group $S(n_i)^{2j_i}$. 

\begin{definition}\label{def:FS.FA01}
Let $i\geq 1$. Define the projection operator $\Pr_i\colon C(F_i)\to L^2_{\sym}(F_i,\mu_i)\cap C(F_i)$  by
\begin{equation}\label{eq:FS.FA02}
\Pr_i f(x)=\begin{cases}
\frac{1}{n_i}\sum\limits_{w=1}^{n_i}f(\phi_i(x)w)&\text{if }x\in F_i\setminus B_i,\\
f(x)&\text{if }x\in B_i.
\end{cases}
\end{equation}
The orthogonal complement operator, $\Pr_i^\bot\colon C(F_i)\to L^2_{\asym}(F_i,\mu_i)\cap C(F_i)$, is defined as 
\begin{equation*}\label{eq:FS.FA03}
\Pr_i^\bot f(x)=f(x)-\Pr_i f(x).
\end{equation*}
Analogous formal definitions of these operators applies to bounded Borel functions.
\end{definition}

The cell structure of $F_i$ described in Definition~\ref{def:DF.CS01} provides a natural framework to study a canonical diffusion on the space by means of cable systems/quantum graphs. We refer to~\cite{BB04,Kuc04} for basic definitions and results concerning them. To this end, for any $i\geq 1$ and $\alpha=\alpha_1\ldots \alpha_{i+1}\in\mcA_i$, we set $f_\alpha:=\varphi_\alpha^*f$. Taking into account the convention imposed in Remark~\ref{R:DF.CS01}, any function $f\in L^2(F_i,\mu_i)\cap C(F_i)$ can be viewed as a function $f\in\bigoplus_{\alpha\in\mcA_i}C([0,L_i])$ that satisfies the matching conditions
\begin{equation}\label{E:matching}
\begin{cases}
f_{\alpha}(0)=f_{\beta}(0),f_{\alpha}(L_i)=f_{\beta}(L_i) &\text{if }\alpha_\ell=\beta_\ell\quad\forall\,1\leq\ell\leq i,\\
f_\alpha(0)=f_\beta(L_i)&\text{if }\alpha_1\equiv (\beta_1\!+\!L_i)\hspace*{-.75em}\mod 2\pi,~\alpha_\ell=\beta_\ell\quad\forall\,1<\ell\leq i.
\end{cases}
\end{equation}
Figure~\ref{F:matching} illustrates these conditions.
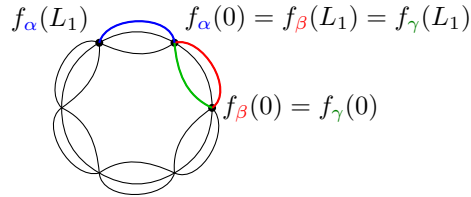
\begin{figure}[H]
\centering
\begin{tikzpicture}
\coordinate[label=above left:{\small{$f_{{\color{blue}\alpha}}(L_1)$}}] (x) at ($(120:1)$);
\fill (x) circle (1.5pt);
\coordinate[label=above right:{\small{$f_{{\color{blue}\alpha}}(0)=f_{\color{red}\beta}(L_1)=f_{\color{green!50!black}\gamma}(L_1)$}}] (z) at ($(60:1)$) ;
\fill (z) circle (1.5pt);
\coordinate[label=right:{\small{$f_{\color{red}\beta}(0)=f_{\color{green!50!black}\gamma}(0)$}}] (y) at ($(0:1)$);
\fill (y) circle (1.5pt);
\foreach \a in {0,60,120,180,240,300}{
\draw ($(\a:1)$) to[out=90-55+\a,in=15+\a] ($(60+\a:1)$);
\draw ($(\a:1)$) to[out=90+\a,in=330+\a] ($(60+\a:1)$);
\draw ($(\a:1)$) to[out=90+65+\a,in=-90+5+\a] ($(60+\a:1)$);
}
\draw[red, thick] ($(0:1)$) to[out=90-55,in=15] ($(60:1)$); 
\draw[green!75!black, thick] ($(0:1)$) to[out=90+65,in=-90+5] ($(60:1)$);
\draw[blue, thick] ($(60:1)$) to[out=90-55+60,in=15+60] ($(120:1)$);
\end{tikzpicture}
\caption{Matching conditions: $\beta_1=\gamma_1$, $\beta_2\neq\gamma_2$, and $\alpha_1=\beta_1+L_1$, $\alpha_2=\beta_2$.}
\label{F:matching}
\end{figure}
Using this notation, the symmetric and antisymmetric part of $L^2(F_i,\mu_i)$ can be characterized as follows.
\begin{proposition}\label{P:FS.FA01}
For any $i\geq 1$, 
\begin{align*}\label{eq:FS.FA06}
&(i)\; L^2_{\sym}(F_i,\mu_i)\cap C(F_i)=\Big\{f\in\!\bigoplus_{\alpha\in\mcA_i} C(F_i)~|~ f_{\alpha}(\theta)=f_{\beta}(\theta),~\theta\in [0,L_i],~\alpha_\ell=\beta_\ell\;\;\forall\,1\leq\ell\leq i\Big\}.\\
&(ii)\; L^2_{\asym}(F_i,\mu_i)\cap C(F_i)=\Big\{f\in\!\bigoplus_{\alpha\in\mcA_i} C(F_i)~|~\text{for any }x\in F_i, \sum_{\alpha\in\mcA_x}f_\alpha(\theta)=0~~\forall\,\theta\in [0,L_i]\Big\}.
\end{align*}
\end{proposition}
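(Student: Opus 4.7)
My plan is to decode the symmetric-group action through the cell/word parametrization and then read off both characterizations directly. By Definition~\ref{def:DF.CS01} together with the orientation convention of Remark~\ref{R:DF.CS01}, a word $\alpha\in\mcA_i$ encodes a starting angle $\alpha_1$ and branch labels $\alpha_2,\ldots,\alpha_{i+1}$, and two cells lie in the same level-$i$ bundle if and only if their first $i$ coordinates coincide. The $S(n_i)$ factor acting on such a bundle permutes the final label $\alpha_{i+1}\in[n_i]$, and under the parametrizations $\varphi_\alpha\colon[0,L_i]\to I_\alpha$ such a transposition identifies $f_\alpha(\theta)$ with $f_\beta(\theta)$ pointwise on $[0,L_i]$.

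For part (i) I would argue that $f$ is invariant under the full group iff every transposition within every level-$i$ bundle fixes $f$, which in turn is equivalent to $f_\alpha\equiv f_\beta$ on $[0,L_i]$ whenever $\alpha_\ell=\beta_\ell$ for $1\le\ell\le i$. The converse inclusion is immediate, and since the matching conditions~\eqref{E:matching} depend only on endpoint values shared by all branches of a bundle, the characterization remains inside $C(F_i)$.

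For part (ii) I would exploit that $L^2_{\asym}(F_i,\mu_i)$ is the kernel of the orthogonal projection $\Pr_i$ of Definition~\ref{def:FS.FA01}, which is exactly the uniform group average on each bundle. For $x\in F_i\setminus B_i$ one reads off
\begin{equation*}
\Pr_i f(x)=\frac{1}{n_i}\sum_{w=1}^{n_i}f(\phi_i(x)\,w)=\frac{1}{n_i}\sum_{\alpha\in\mcA_x}f_\alpha(\theta_x),
\end{equation*}
so $\Pr_i f\equiv 0$ on the open cells is equivalent to $\sum_{\alpha\in\mcA_x}f_\alpha(\theta)=0$ for every $x\in F_i$ and every $\theta\in(0,L_i)$. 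Continuity of $f$ then closes the condition at the endpoints: the points $\varphi_\alpha(0)$ (respectively $\varphi_\alpha(L_i)$) coincide across $\alpha\in\mcA_x$ at a single junction of $B_i$, so the common endpoint value $f(\text{junction})$ must vanish, which matches exactly the prescription $\Pr_i f=f=0$ on $B_i$ built into Definition~\ref{def:FS.FA01}.

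The main subtlety I anticipate is the bookkeeping around the junction set $B_i$, where the pair $(\theta_x,\alpha_x)$ is non-unique (Remark~\ref{rem:DF.CS02}); the lex-smallest convention there makes $\mcA_x$ well-defined, and since each level-$i$ bundle is already swept out by interior points of any of its cells, it suffices to verify both characterizations on $F_i\setminus B_i$ and extend by continuity. A minor notational point is that $S(n_i)^{2j_i}$ should be read as one $S(n_i)$ factor per level-$i$ bundle, which the word structure of $\mcA_i$ encodes transparently.
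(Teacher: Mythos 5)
Your argument is correct and follows essentially the same route as the paper: for (i) you use, as the paper does, that two $i$-cells lie in the same bundle precisely when their words agree in the first $i$ letters so that the $S(n_i)$-action permutes only the last letter, and for (ii) you use the identity $\frac{1}{n_i}\sum_{\alpha\in\mcA_x}f_\alpha(\theta_x)=\Pr_i f(x)$ together with $L^2_{\asym}=\ker\Pr_i$. Your extra care about the endpoint/junction bookkeeping (forcing $f=0$ on $B_i$, cf.\ Corollary~\ref{cor:FS.FA01}(ii)) is a slightly more explicit version of what the paper leaves implicit, not a different argument.
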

\begin{proof}
By construction, if two $i$-cells $I_{\alpha}$ and $I_{\beta}$ share both endpoints, then $\alpha_\ell=\beta_\ell$ for all $1\leq \ell\leq i$ and $\alpha_{i+1}\neq\beta_{i+1}$. Thus, for each $\theta\in[0,L_i]$, $\varphi_{\alpha}(\theta)=\phi_i(x)\alpha_{i+1}\in F_i\setminus B_i$ and $\varphi_\beta(\theta)=\phi_i(x)\beta_{i+1}\in F_i\setminus B_i$, for some $x\in F_{i-1}$, thus proving (i). On the other hand, $f\in L^2_{\asym}(F_i,\mu_i)\cap C(F_i)$ if and only if $\Pr_i^\bot f=f$, hence for each fixed $x\in F_i$ and any $\theta\in [0,L_i]$
\begin{equation*}\label{eq:FS.FA08}
\sum_{\alpha\in\mcA_x}f_\alpha(\theta)=\sum_{\alpha\in\mcA_x}\varphi_\alpha^*f(\theta)=\sum_{\alpha\in\mcA_x}f(\phi_i(x)\alpha_{i+1})=\sum_{w=1}^{n_i}f(\phi_i(x)w)=0
\end{equation*}
which proves~(ii).
\end{proof}
The preceding characterization has several useful implications. For instance, any function $\Pr_i f$ with $f\in C(F_i)$ can be regarded as a function in $C(F_{i-1})$.
\begin{corollary}\label{cor:FS.FA01}
\begin{enumerate}[leftmargin=*,label=(\roman*), align=left, labelsep=0in,wide=0em]
\item The subspace $L^2_{\sym}(F_i,\mu_i)\cap C(F_i)$ is isomorphic to $C(F_{i-1})$. Moreover, for each $x\in F_i$,
\begin{equation}\label{E:FS.FA09}
\sum_{\alpha\in\mcA_x}\varphi_\alpha^*f(\theta)=n_i\varphi_{\alpha_x}^*f(\theta)
\end{equation}
for any $\theta\in[0,L_i]$ and $f\in L^2_{\sym}(F_i,\mu_i)\cap C(F_i)$.
\item For any $f\in L^2_{\asym}(F_i,\mu_i)\cap C(F_i)$, it holds that $f(x)=0$ for all $x\in B_{i}$.
\end{enumerate}
\end{corollary}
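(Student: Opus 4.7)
The plan is to derive both claims directly from the characterizations in Proposition~\ref{P:FS.FA01}, combined with the observation recorded in Remark~\ref{R:DF.CS01}(ii) that every branching point has exactly $n_i$ starting cells, i.e.\ $|\mcA_x|=n_i$ for every $x\in F_i$.

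For part (i), I would exhibit the isomorphism concretely via the pullback $\phi_i^*\colon C(F_{i-1})\to C(F_i)$ defined by $\phi_i^* g:=g\circ\phi_i$. Since $\phi_i$ is continuous and surjective, $\phi_i^*$ is a bounded linear injection. Its image lies in $L^2_{\sym}(F_i,\mu_i)\cap C(F_i)$ because whenever $\alpha_\ell=\beta_\ell$ for $1\leq \ell\leq i$, the points $\varphi_\alpha(\theta)$ and $\varphi_\beta(\theta)$ differ only in their last letter and are identified by $\phi_i$, which exactly matches the description in Proposition~\ref{P:FS.FA01}(i). For surjectivity, given a symmetric $f\in C(F_i)$ I would set $\tilde f(y):=f(\pi_i(yw))$, which is independent of $w\in[n_i]$ by Proposition~\ref{P:FS.FA01}(i), and then check $\phi_i^*\tilde f=f$ by splitting into the cases $y\in B_i$ and $y\in F_{i-1}\setminus B_i$ using the piecewise definition of $\pi_i$. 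The identity~\eqref{E:FS.FA09} is then immediate: for symmetric $f$, Proposition~\ref{P:FS.FA01}(i) forces $\varphi_\alpha^*f(\theta)=\varphi_{\alpha_x}^*f(\theta)$ for every $\alpha\in\mcA_x$, and there are exactly $n_i$ such cells.

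For part (ii), fix $f\in L^2_{\asym}(F_i,\mu_i)\cap C(F_i)$ and $x\in B_i$. By Definition~\ref{def:DF.CS02} and the convention of Remark~\ref{rem:DF.CS02}, we have $\theta_x=0$ and every $\alpha\in\mcA_x$ satisfies $\varphi_\alpha(0)=x$, so $\varphi_\alpha^*f(0)=f(x)$ for all $\alpha\in\mcA_x$. Evaluating the antisymmetry identity of Proposition~\ref{P:FS.FA01}(ii) at $\theta=0$ gives $n_if(x)=\sum_{\alpha\in\mcA_x}f_\alpha(0)=0$, whence $f(x)=0$.

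The only conceptual obstacle I anticipate is justifying continuity of the inverse map $f\mapsto\tilde f$ constructed in (i); I plan to bypass any $\varepsilon$-$\delta$ argument by invoking the general fact that a continuous surjection between compact Hausdorff spaces is automatically a closed map, and hence a quotient map, so that $\tilde f\circ\phi_i=f\in C(F_i)$ forces $\tilde f\in C(F_{i-1})$. All remaining work is bookkeeping with the symmetric/antisymmetric decomposition and the count $|\mcA_x|=n_i$ at branching points.
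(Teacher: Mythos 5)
Your proposal is correct and follows the same route the paper intends: the paper states this corollary as an immediate consequence of Proposition~\ref{P:FS.FA01} (with no separate written proof), and your derivation --- identifying the symmetric subspace with $\phi_i^*C(F_{i-1})$, using that $\phi_i$ is a quotient map of compact Hausdorff spaces, and evaluating the antisymmetry identity at $\theta=0$ together with the count $|\mcA_x|=n_i$ from Remark~\ref{R:DF.CS01}(ii) --- is exactly the intended argument, just written out in full.
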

Analogous statements hold with $\mcB_b(F_i)$ instead of $C(F_i)$.
\subsection{Diamond fractals} As an inverse limit space, a diamond fractal $F_\infty$ is naturally equipped with the Borel regular measure $\mu_\infty$ induced by the family of measures $\{\mu_i\}_{i\geq 0}$ from Definition~\ref{def:DF.CS03}. As a direct consequence of Definition~\ref{def:Pre02}, $L^2(F_\infty,\mu_\infty)$ admits the following dense subspace.
\begin{proposition}\label{P:FS.D01}
The subspace $\bigcup_{i\geq 0}\Phi_i^*C(F_i)$ is dense in $C(F_\infty)$ with respect to the uniform norm.
\end{proposition}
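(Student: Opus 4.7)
The plan is to apply the Stone–Weierstrass theorem to the subspace $\mcA := \bigcup_{i\geq 0}\Phi_i^* C(F_i)$ of $C(F_\infty)$. The hypotheses for Stone–Weierstrass are the compact Hausdorff property of $F_\infty$ (stated after Definition~\ref{def:Pre02}), and three facts about $\mcA$: that it is a subalgebra, that it contains the constants, and that it separates points.

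First I would establish a nesting: for any $k\leq i$ and $g\in C(F_k)$, Remark~\ref{rem:Pre01} gives $\Phi_k^* g = g{\circ}\phi_{ik}{\circ}\Phi_i = \Phi_i^*(\phi_{ik}^* g)$. Since each $\phi_{ik}$ is continuous (it merely collapses bundles of $i$-cells onto the enclosing $(i{-}1)$-cell in an arc-length-preserving way), $\phi_{ik}^* g\in C(F_i)$, so $\Phi_k^* C(F_k)\subseteq \Phi_i^* C(F_i)$. This nested structure makes $\mcA$ closed under sums and products: any two elements lie in a common $\Phi_i^* C(F_i)$, which is itself an algebra of continuous functions. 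The constants belong to $\mcA$ since they lift from $\Phi_0^* C(F_0)$.

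For separation of points, suppose $x\neq y$ in $F_\infty$. By Definition~\ref{def:Pre02}(a), the points $x$ and $y$ are coordinate sequences $\{x_i\}_{i\geq 1}$ and $\{y_i\}_{i\geq 1}$, and $x\neq y$ forces $x_i\neq y_i$, i.e.\ $\Phi_i(x)\neq\Phi_i(y)$, for some $i\geq 0$. Since each $F_i$ is a compact Hausdorff (indeed metric) space, Urysohn's lemma yields $f\in C(F_i)$ with $f(\Phi_i(x))\neq f(\Phi_i(y))$, so $\Phi_i^* f\in\mcA$ separates the two points.

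With these ingredients in place, Stone–Weierstrass delivers uniform density of $\mcA$ in $C(F_\infty)$. I expect no substantive obstacle here: the proof is essentially formal, inherent to the inverse-limit structure. The only point warranting care is confirming continuity of the bonding maps $\phi_{ik}$, but this is immediate from their geometric description via the cell structure in Section~\ref{sec:1}.
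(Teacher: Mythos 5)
Your proof is correct, but it takes a genuinely different route from the paper. The paper's own proof is a one-line appeal to the ``by construction'' characterization of $C(F_\infty)$ as the set of functions $f$ admitting a sequence $f_i\in C(F_i)$ with $f(x)=\lim_{i\to\infty}f_i(\Phi_i(x))$; as written that only asserts pointwise approximation by elements of $\bigcup_i\Phi_i^*C(F_i)$, so the uniform-norm density is left implicit. Your Stone--Weierstrass argument fills exactly this gap: the nesting $\Phi_k^*C(F_k)=\Phi_i^*(\phi_{ik}^*C(F_k))\subseteq\Phi_i^*C(F_i)$ makes the union an algebra containing the constants, point separation is automatic because $x\neq y$ in the inverse limit forces $\Phi_i(x)\neq\Phi_i(y)$ for some $i$ and each $F_i$ is metric, and compactness of $F_\infty$ closes the argument. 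The result is a self-contained, checkable proof at the cost of invoking continuity of the bonding maps $\phi_{ik}$ and of the projections $\Phi_i$ (both immediate from the quotient/inverse-limit topology, as you note). One small citation quibble: the remark after Definition~\ref{def:Pre02} only asserts that $F_\infty$ is Hausdorff, \emph{locally} compact and second countable; compactness of the diamond fractal instead follows because each $F_i$ is compact and $F_\infty$ is a closed subspace of $\prod_iF_i$ (Tychonoff), a fact the paper uses throughout but does not state at that point.
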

\begin{proof}
By construction, $C(F_\infty):=\{f\colon F_\infty\!\!\to\mbbR~|~\exists~\{f_i\}_{i\geq 0},\text{ with } f_i\in C(F_i)\text{ and }f(x)=\lim\limits_{i\to_\infty}f_i(\Phi_i(x))~\forall x\in F_\infty\}$.
\end{proof}
\section{Heat semigroup on finite approximations}\label{sec:4}
In this section we prove Theorem~\ref{T:MR02} by establishing the existence of a natural diffusion process on $F_i$ whose associated semigroup has the kernel $p_t^{F_i}(x,y)$. To this end we follow the lines of~\cite{BPY89} and prove in Proposition~\ref{P:HSFA01} that
\begin{equation}\label{eq:HSFA00}
T_t^{F_i}f(x)=\int_{F_i}p_t^{F_i}(x,y)f(y)\,\mu_i(dy)
\end{equation}
defines a strongly continuous Markov semigroup on $C(F_i)$ with the strong Feller property. From the general theory of Markov processes, see e.g.~\cite[Theorem A.2.2]{FOT94}, there exists a Hunt process on $F_i$ with transition function $p_t^{F_i}(x,y)$. The proof builds on an inductive argument that comes down to proving the case $i=1$.

To begin with, let $\{T^{F_0}_t\}_{t\geq 0}$ denote the Markov semigroup associated with the standard Brownian motion on the circle parametrized by $\varphi_{\text{\o}}(\theta)= e^{\text{i}\theta}$, so that $T_0=\id$ and, for each $t>0$,
\begin{equation*}\label{eq:HSFA01}
T^{F_0}_t g(\theta_0)=\int_0^{2\pi}p_t^{F_0}(\theta_0,\theta)g(\theta)\,d\theta
\end{equation*}
for any $g\in L^2([0,2\pi],\mu_{\text{\o}})\cap C([0,2\pi])$, where $p_t^{F_0}(x,y)$ is given by~\eqref{E:HKS1}.

For each $i\geq 1$, denote by $\{T_t^{[0,L_i]_D}\}_{t\geq 0}$ the Markov semigroup associated with one-dimensional Brownian motion on the interval $[0,L_i]$ killed at the boundary, i.e. $T^{[0,L_i]_D}_0=\id$ and, for each $t>0$,
\begin{equation*}\label{eq:HSFA02}
T_t^{[0,L_i]_D}g(\theta_0)=\int_0^{L_i}p_t^{[0,L_i]_D}(\theta_0,\theta)g(\theta)\,d\theta
\end{equation*}
for any $g\in L^2([0,L_i],d\theta)\cap C([0,L_i])$ with $g(0)=g(L_i)=0$, where $p_t^{[0,L_i]_D}(\theta_0,\theta)$ is given by~\eqref{E:1dHK}.

The key idea to prove Proposition~\ref{P:HSFA01} and hence obtain the formula in Theorem~\ref{T:MR02} relies in the decomposition of $T_t^{F_i}$ into two parts %with well-known properties. 
whose properties will be easier to analyze. 
These parts can be understood as the symmetric and antisymmetric, respectively. This decomposition is presented in the next lemma and it is crucial for the forthcoming proofs.
\begin{lemma}\label{L:HSFA01}
For any $i\geq 1$, $t\geq 0$, $f\in\mcB_b(F_i)$ and any fixed $x\in F_i$ it holds that
\begin{equation*}\label{eq:HSFA04}
%T^{F_i}_tf(x)=T_t^{\sym_i}f(x)+T_t^{\asym_i}f(x),
T^{F_i}_tf(x)=T_t^{F_{i-1}}(\Pr_i f)(\phi_i(x))+T_t^{[0,L_i]_D}(\Pr_i^\bot\! f)_{\alpha_x}(\theta_x).
\end{equation*}
\end{lemma}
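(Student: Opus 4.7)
The plan is to compute $T_t^{F_i} f(x) = \int_{F_i} p_t^{F_i}(x,y) f(y)\, \mu_i(dy)$ by writing the kernel in its Dirichlet form~\eqref{E:MR02b} as
\begin{equation*}
p_t^{F_i}(x,y) = p_t^{F_{i-1}}(\phi_i(x),\phi_i(y)) + R_i(x,y),
\end{equation*}
where the correction $R_i(x,y)$ is supported on those $y$ lying in the same bundle as $x$, taking the value $N_{i-1}(n_i-1)\,p_t^{[0,L_i]_D}(\theta_x,\theta_y)$ when $\alpha_y=\alpha_x$ and $-N_{i-1}\,p_t^{[0,L_i]_D}(\theta_x,\theta_y)$ when $\alpha_y\in\mcA_x\setminus\{\alpha_x\}$. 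Decomposing $f = \Pr_i f + \Pr_i^\bot f$ produces four integrals; I will show two of them vanish and that the remaining pair yields the two summands in the statement.

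For the two integrals against the base kernel $p_t^{F_{i-1}}(\phi_i(x),\phi_i(y))$, I observe that this factor depends on $y$ only through $\phi_i(y)$, hence as a function of $y$ it belongs to $L^2_{\sym}(F_i,\mu_i)$. When paired with $\Pr_i^\bot f \in L^2_{\asym}(F_i,\mu_i)$, orthogonality of the decomposition $L^2(F_i,\mu_i) = L^2_{\sym}\oplus L^2_{\asym}$ gives zero. When paired with $\Pr_i f$, Corollary~\ref{cor:FS.FA01}(i) lets me regard $\Pr_i f$ as a continuous function on $F_{i-1}$, so the integrand is of the form $g\circ\phi_i$; the push-forward identity of Lemma~\ref{lem:DF.CS01} then converts the integral over $F_i$ into one over $F_{i-1}$, producing exactly $T_t^{F_{i-1}}(\Pr_i f)(\phi_i(x))$.

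For the two integrals against $R_i$, the integration domain reduces to the bundle $\bigcup_{\alpha\in\mcA_x} I_\alpha$, which I parametrize cell-by-cell using Definition~\ref{def:DF.CS03}. The result is a weighted sum of integrals of $p_t^{[0,L_i]_D}(\theta_x,\cdot)$ against the cell restrictions $(\Pr_i f)_\alpha$ or $(\Pr_i^\bot f)_\alpha$. In the symmetric case, Proposition~\ref{P:FS.FA01}(i) tells me that $(\Pr_i f)_\alpha = (\Pr_i f)_{\alpha_x}$ for every $\alpha \in \mcA_x$, and summing the weight $N_{i-1}(n_i-1)$ from the $\alpha=\alpha_x$ contribution together with $-N_{i-1}$ summed over the $n_i-1$ other cells yields zero. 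In the antisymmetric case, Proposition~\ref{P:FS.FA01}(ii) gives $\sum_{\alpha\in\mcA_x\setminus\{\alpha_x\}}(\Pr_i^\bot f)_\alpha = -(\Pr_i^\bot f)_{\alpha_x}$; the contributions then combine with total coefficient $N_{i-1}(n_i-1)+N_{i-1} = N_i$, which precisely cancels the $1/N_i$ coming from Definition~\ref{def:DF.CS03}, leaving $T_t^{[0,L_i]_D}(\Pr_i^\bot f)_{\alpha_x}(\theta_x)$.

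The main obstacle is purely combinatorial bookkeeping: tracking the signs and multiplicities in $R_i$ across the $n_i$ cells of the bundle $\mcA_x$ and verifying that the normalization $N_i = N_{i-1} n_i$ from Definition~\ref{D:DF.PL01} is exactly what turns the antisymmetric contribution into a unit-weight Dirichlet semigroup on $[0,L_i]$. All other ingredients are either algebraic cancellations or previously established structural facts from Section~\ref{sec:3}, so no further analytic input is needed.
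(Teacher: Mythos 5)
Your proposal is correct and follows essentially the same route as the paper: split $f=\Pr_i f+\Pr_i^\bot f$, expand the kernel via~\eqref{E:MR02b}, kill the cross terms using the orthogonality of $L^2_{\sym}\oplus L^2_{\asym}$ together with Corollary~\ref{cor:FS.FA01}(i) and Proposition~\ref{P:FS.FA01}(ii), and push the symmetric part down to $F_{i-1}$ via Lemma~\ref{lem:DF.CS01}. Your sign and weight bookkeeping ($N_{i-1}(n_i-1)+N_{i-1}=N_i$ cancelling the $1/N_i$ in $\mu_i$) matches the cancellation the paper carries out in~\eqref{eq:HSFA09}.
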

\begin{proof}
Let $f\in\mcB_b(F_i)$ and $x\in F_i$. Writing $f=\Pr_i f+\Pr_i^\bot\!f\in L_{\sym}^2(F_i,\mu_i)\oplus  L_{\asym}^2(F_i,\mu_i)$ one can split $T^{F_i}_tf(x)$ into two integrals. For the symmetric part, we have
\begin{equation}\label{eq:HSFA08}
\int_{F_i}p_t^{F_i}(x,y)\Pr_i f(y)\mu_i(dy)=\sum_{\alpha\in\mcA_i}\int_{I_\alpha}p_t^{F_i}(x,y)\Pr_i f(y)\mu_i(dy).
\end{equation}
Notice that in the formula from Theorem~\ref{T:MR02}, $p_t^{F_i}(x,y)$ has a different expression depending on the type of cell $y$ belongs to. In particular, whether that cell is the one containing $x$, $I_{\alpha_x}$, or it belongs to the same bundle as $I_{\alpha_x}$, or to a different bundle. Taking this on account and using~\eqref{E:MR02b}, the integral~\eqref{eq:HSFA08} becomes 
\begin{align}\label{eq:HSFA09}
\int_{F_i}p_t^{F_{i-1}}(\phi_i(x),\phi_i(y))\Pr_i f(y)\,\mu_i(dy)&-\frac{1}{n_i}\int_0^{L_i}\!p_t^{[0,L_i]_D}(\theta_x,\theta)\!\!\!\sum_{\alpha\in\mcA_x}(\Pr_i f)_{\alpha}(\theta)\,d\theta\\
&+\int_0^{L_i}\!p_t^{[0,L_i]_D}(\theta_x,\theta)(\Pr_i f)_{\alpha_x}(\theta)\,d\theta.\nonumber
\end{align}
Since $\Pr_i f\in L^2_{\sym}(F_i,\mu_i)$, the last two terms cancel out by Corollary~\ref{cor:FS.FA01}(i) and Lemma~\ref{lem:DF.CS01} yields
\begin{equation*}
\int_{F_i}p_t^{F_i}(x,y)\Pr_i f(y)\mu_i(dy)=\int_{F_{i-1}}p_t^{F_{i-1}}(\phi_i(x),y)\Pr_i f(y)\,\mu_{i-1}(dy)=T_t^{F_{i-1}}(\Pr_i f)(\phi_i(x))
\end{equation*}
as we wanted to prove. For the antisymmetric part analogous arguments yield~\eqref{eq:HSFA09} with $\Pr_i^\bot\! f$ instead of $\Pr_i f$. Now, since $p_t^{F_{i-1}}(\phi_i(x),\cdot)\in L^2_{\sym}(F_i,\mu)$, the first term in the corresponding version of~\eqref{eq:HSFA09} is zero. Furthermore, in view of Proposition~\ref{P:FS.FA01}(ii), the second term is zero as well and therefore
\begin{equation*}
\int_{F_i}p_t^{F_i}(x,y)\Pr_i^\bot f(y)\mu_i(dy)=\int_0^{L_i}\!p_t^{[0,L_i]_D}(\theta_x,\theta)(\Pr_i f)_{\alpha_x}(\theta)\,d\theta=T_t^{[0,L_i]_D}(\Pr_i^\bot\! f)_{\alpha_x}(\theta_x)
\end{equation*}
as desired.
\end{proof}

\begin{remark}\label{R:FS.FA02}
Referring to each part as symmetric, respectively antisymmetric, is consistent with the fact that, as functions on $F_i$, $T_t^{F_{i-1}}(\Pr_i f)(\phi_i(\cdot))\in L^2_{\sym}(F_i,\mu_i)\cap\mcB_b(F_i)$ and $T_t^{[0,L_i]_D}(\Pr_i^\bot\! f)_{\alpha_\cdot}(\theta_\cdot)\in L^2_{\asym}(F_i,\mu_i)\cap\mcB_b(F_i)$. In particular the latter holds because by virtue of Proposition~\ref{P:FS.FA01}
\begin{align*}
\sum_{\alpha\in\mcA_x}T_t^{[0,L_i]_D}(\Pr_i^\bot\! f)_{\alpha}(\theta)=\int_0^{L_i}\!p_t^{[0,L_i]_D}(\theta_x,\theta)\!\!\!\sum_{\alpha\in\mcA_x}(\Pr_i f)_{\alpha}(\theta)\,d\theta=0
\end{align*}
for any $x\in F_i$.
\end{remark}
The advantage of the decomposition from Lemma~\ref{L:HSFA01} unfolds in the case $i=1$ because the semigroups $T_t^{F_0}$ and $T_t^{[0,L_1]_D}$ are well-studied in the literature, see e.g.~\cite[Section 2.5]{BGL14}. This will be specially useful to establish by induction the validity of the next proposition.

\begin{proposition}\label{P:HSFA01}
The family of operators $\{T_t^{F_i}\}_{t\geq 0}$ defined by~\eqref{eq:HSFA00} is a strongly continuous conservative Markov semigroup on $C(F_i)$ with the strong Feller property.
\end{proposition}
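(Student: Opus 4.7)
The plan is to prove Proposition~\ref{P:HSFA01} by induction on $i$. The base case $i=0$ is classical: the Brownian motion on $\mbbS^1$, with kernel~\eqref{E:HKS1}, induces a strongly continuous conservative Markov semigroup on $C(\mbbS^1)$ with the strong Feller property. For the inductive step, Lemma~\ref{L:HSFA01} is the decisive tool, since it reduces every assertion about $T_t^{F_i}$ to the corresponding property of $T_t^{F_{i-1}}$ (given by the inductive hypothesis) and of $T_t^{[0,L_i]_D}$ (classical for the killed Brownian motion on an interval).

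I would dispatch the easier properties first using this decomposition. For $C(F_i)$-invariance, the symmetric summand equals the pullback along the continuous map $\phi_i$ of $T_t^{F_{i-1}}(\Pr_i f)\in C(F_{i-1})$, using Corollary~\ref{cor:FS.FA01}(i) to identify $\Pr_i f$ with a continuous function on $F_{i-1}$; the antisymmetric summand is continuous on each of the finitely many cells $I_\alpha$ and vanishes at all endpoints in $B_i$ (since the Dirichlet interval semigroup preserves the vanishing of $(\Pr_i^\bot f)_\alpha$ at $0$ and $L_i$ provided by Corollary~\ref{cor:FS.FA01}(ii)), so it patches together as an element of $C(F_i)$. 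The semigroup property follows from Remark~\ref{R:FS.FA02}: the symmetric and antisymmetric subspaces are invariant under $T_t^{F_i}$, so $T_t^{F_i}T_s^{F_i}$ splits into two pieces that reduce to the semigroup properties of $T_t^{F_{i-1}}$ and of $T_t^{[0,L_i]_D}$ respectively. Strong continuity as $t\to 0^+$ holds because each summand converges uniformly (by induction and by classical theory), with uniformity across the finite collection $\mcA_i$ of cells being automatic. Conservativeness is immediate since $\Pr_i\mathbf{1}=\mathbf{1}$ and $\Pr_i^\bot\mathbf{1}=0$, giving $T_t^{F_i}\mathbf{1}=T_t^{F_{i-1}}\mathbf{1}{\circ}\phi_i=\mathbf{1}$. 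The strong Feller property is inherited analogously: the decomposition of Lemma~\ref{L:HSFA01} applies to bounded Borel $f$, and both kernels $p_t^{F_{i-1}}$ (by induction) and $p_t^{[0,L_i]_D}$ are continuous in their spatial arguments.

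The main obstacle is the Markov (positivity-preservation) property, since the two terms in Lemma~\ref{L:HSFA01} are not individually nonnegative when $f\geq 0$. To overcome this I would identify $T_t^{F_i}$ with the heat semigroup generated by the natural quantum-graph Dirichlet form
\begin{equation*}
\mcE^{F_i}(f,f)=\frac{1}{N_i}\sum_{\alpha\in\mcA_i}\int_0^{L_i}|f_\alpha'(\theta)|^2\,d\theta,
\end{equation*}
defined on the set of $f\in L^2(F_i,\mu_i)$ whose restrictions $f_\alpha$ lie in $H^1([0,L_i])$ and satisfy the matching conditions~\eqref{E:matching}. This form is Markovian by standard quantum-graph theory~\cite{Kuc04,BB04}, so its associated semigroup is sub-Markov and, combined with conservativeness, Markov. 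The symmetric/antisymmetric decomposition diagonalizes the generator of $\mcE^{F_i}$ into an $F_{i-1}$-block (which, by the self-similar structure of the construction, matches the generator of $T_t^{F_{i-1}}$) together with cell-wise Dirichlet-interval blocks, and a Fourier check using~\eqref{E:HKS1} and~\eqref{E:1dHK} verifies that this semigroup has integral kernel $p_t^{F_i}$ as prescribed in Theorem~\ref{T:MR02}. Markovianity therefore transfers to $T_t^{F_i}$, closing the induction.
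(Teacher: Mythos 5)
Your overall architecture---induction on $i$ with the circle as base case, and Lemma~\ref{L:HSFA01} reducing each property of $T_t^{F_i}$ to the corresponding properties of $T_t^{F_{i-1}}$ and $T_t^{[0,L_i]_D}$---is exactly the paper's, and your treatment of $C(F_i)$-invariance, the semigroup law (via the invariance of the symmetric and antisymmetric subspaces, i.e.\ Remark~\ref{R:FS.FA02}), conservativeness, strong continuity and the strong Feller property matches the paper's proof item by item. The one place you genuinely diverge is positivity. The paper argues directly from the decomposition: for $f\geq 0$ it notes $\Pr_1 f\geq 0$ (clear, since $\Pr_1$ averages) and then asserts $\Pr_1^\bot f\geq 0$ as well, invoking contractivity of the projection. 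Your observation that the antisymmetric summand is \emph{not} individually sign-preserving is correct (indeed $\sum_{\alpha\in\mcA_x}(\Pr_i^\bot f)_\alpha=0$, so $\Pr_i^\bot f$ changes sign unless it vanishes), so you have put your finger on the most delicate step of the published argument rather than missed something. Your alternative---realize $T_t^{F_i}$ as the semigroup of the quantum-graph Dirichlet form \eqref{E:HSFA14}, whose Markovianity is standard, and transfer positivity back---is sound in principle and consistent with Remark~\ref{R:HSFA02}(i), where the paper records precisely this form a posteriori.

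The cost of your route is that the sentence ``a Fourier check \dots verifies that this semigroup has integral kernel $p_t^{F_i}$'' carries essentially all the weight: it requires (a) checking that the form is block-diagonal with respect to $L^2_{\sym}\oplus L^2_{\asym}$, (b) identifying the symmetric block with $(\mcE^{F_{i-1}},\mcF^{F_{i-1}})$ under Corollary~\ref{cor:FS.FA01}(i), where the normalizations must be seen to match via $n_i/N_i=1/N_{i-1}$, and (c) identifying the antisymmetric block with a direct sum of Dirichlet Laplacians on the cells. None of this is hard, but as written it is a plan rather than a proof, and it amounts to re-deriving Lemma~\ref{L:HSFA01} at the level of generators. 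A more elementary repair, closer to the paper, is to prove nonnegativity of the kernel itself from \eqref{E:MR02b}: in the $y_2$ configuration one needs $N_{i-1}p_t^{[0,L_i]_D}(\theta_x,\theta_y)\leq p_t^{F_{i-1}}(\phi_i(x),\phi_i(y))$, which follows by a short induction using domain monotonicity of Dirichlet heat kernels together with the inductive formula for $p_t^{F_{i-1}}$ on the cell containing both projected points; the $y_3$ configuration only adds a nonnegative term. Either repair is acceptable; just be aware that your proposal, as it stands, defers the decisive computation.
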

\begin{proof}
We check the corresponding properties of the family $\{T_t^{F_i}\}_{t\geq 0}$ for $i=1$. 
\begin{enumerate}[leftmargin=.25in, label=(\roman*),wide=0in]
\item Symmetry: follows from the definition.
\item Semigroup property: On the one hand, applying Lemma~\ref{L:HSFA01} and the semigroup property of $T_t^{F_0}$ and $T_t^{[0,L_1]_D}$ it follows that for any $t,s> 0$
\begin{equation}\label{eq:HSFA11}
T_{t+s}^{F_1}f(x)=T_{t}^{F_0}T_s^{F_0}\Pr_1 f(\phi_1(x))+T_t^{[0,L_1]_D}P_s^{[0,L_1]_D}(\Pr_1^\bot\! f)_{\alpha_x}(\theta_x).
\end{equation}
On the other hand, applying twice the decomposition from Lemma~\ref{L:HSFA01} yields
\begin{align}\label{eq:HSFA12}
T_t^{F_1}T_s^{F_1}f(x)&=T_t^{F_0}\Pr_1\big(T_s^{F_0}\Pr_1 f(\phi_1(\cdot))\big)(x)+T_t^{F_0}\Pr_1\big(T_s^{[0,L_1]_D}(\Pr_1^\bot\!f)_{\alpha_\cdot}(\theta_\cdot)\big)(x)\\
&+T_t^{[0,L_1]_D}\big(\Pr_1^\bot T_s^{F_0}\Pr_1 f(\phi_1(\cdot))\big)_{\alpha_x}(\theta_x)+T_t^{[0,L_1]_D}T_s^{[0,L_1]_D}(\Pr_1^\bot\! f)_{\alpha_x}(\theta_x).\nonumber
\end{align}
In view of Remark~\ref{R:FS.FA02}, $\Pr_1\big(T_s^{[0,L_1]_D}(\Pr_1^\bot\!f)_{\alpha_\cdot}(\theta_\cdot)\big)$ and $\Pr_1^\bot\!\big(T_s^{F_0}\Pr_1 f(\phi_1(\cdot))\big)$ equal zero, hence~\eqref{eq:HSFA11} and~\eqref{eq:HSFA12} coincide.
\item Contractivity: Consider $f\in C(F_1)$ such that $f\geq 0$. By definition of the projector operator~\eqref{eq:FS.FA02}, we have that $\Pr_1 f\geq 0$ and the properties of $T_t^{F_0}$ thus yield $T_t^{F_0}\Pr_1 f\geq 0$. Furthermore, $\Pr_1$ is a contraction (see e.g.~\cite[Theorem 3, p.84]{Yos74}), which implies $\Pr_1^\bot f\geq 0$ as well. By the properties of $T_t^{[0,L_1]_D}$ we have $T_t^{[0,L_1]_D}(\Pr_1^\bot\!f)_{\alpha}\geq 0$ for any $\alpha\in\mcA_1$ and by linearity, $T_t^{F_1}f\geq 0$. The contractivity of $T_t^{F_0}$ and $T_t^{[0,L_1]_D}$ yields $\|T_t^{F_1}f\|_\infty\leq \|f\|_\infty$. This extends to any $f\in C(F_1)$ by decomposition into positive and negative part. 
\item Mass conservation: Since $\bm{1}\in L_{\sym}^2(F_1,\mu_1)$, $T_t^{F_1}\bm{1}=T_t^{F_0}\bm{1}=\bm{1}$.
\item Markov property: Follows from (iii) and (iv).
\item Strong continuity: For any $x\in F_1$ we have that $(\Pr_1^\bot\!f)_{\alpha_x}(\theta_x)=\Pr_1^\bot\!f(x)$ and $\Pr_1 f(x)$ can be identified with $\Pr_1 f(\phi_i(x))$. Thus, 
\begin{equation*}
|T_t^{F_1}f(x)-f(x)|\leq |T_t^{F_0}\Pr_1 f(\phi_1(x))-\Pr_1 f(x)|+|T_t^{[0,L_1]_D}(\Pr_1^\bot\!f)_{\alpha_x}(\theta_x)-\Pr_1^\bot\!f(x)|,
\end{equation*}
which tends to zero as $t\to 0$ by virtue of the strong continuity of $T_t^{F_0}$ and $T_t^{[0,L_1]_D}$.
\item Strong Feller property: Applying Lemma~\ref{L:HSFA01} and the triangular inequality, for any $f\in\mcB_b(F_1)$,
\begin{multline}\label{eq:HSFA10}
|T_t^{F_1}f(x)-T_t^{F_1}f(y)|
\leq |T_t^{F_0}\Pr_i f(\phi_1(x))-T_t^{F_0}\Pr_1 f(\phi_1(y))|\\
+|T_t^{[0,L_1]_D}(\Pr_1^\bot\!f)_{\alpha_x}(\theta_x)-T_t^{[0,L_1]_D}(\Pr_1^\bot\!f)_{\alpha_y}(\theta_y)|.
\end{multline}
On the one hand, $(P_1^\bot f)_\alpha\in\mcB_b([0,L_1])$ for any $\alpha\in\mcA_1$ and by Corollary~\ref{cor:FS.FA01}, $\Pr_1 f$ can be viewed as a function in $\mcB_b(F_0)$. On the other hand, if $x$ is close enough to $y$, then $\phi_1(x)$ is close to $\phi_1(y)$ and $\theta_x$ is close to $\theta_y$. Moreover, there exists $\alpha\in \mcA_1$ such that $x,y\in I_\alpha$, so that $\alpha_x=\alpha_y=\alpha$. The strong Feller property of $T_t^{F_0}$ and of $T_t^{[0,L_1]_D}$ now imply that~\eqref{eq:HSFA10} vanishes as $x$ approaches $y$.
\end{enumerate}
Finally, let us consider $i>1$ and assume that $\{T_t^{F_{i-1}}\}_{t\geq 0}$ is a strongly continuous conservative Markov semigroup on $C(F_{i-1})$ with the strong Feller property. The previous arguments applied verbatim substituting $F_0$ by $F_{i-1}$ and $L_1$ by $L_i$ prove the assertion for $\{T_t^{F_i}\}_{t\geq 0}$.
\end{proof}

As a consequence of the preceding result, the function $p^{F_i}_t(x,y)$ defined in Theorem~\ref{T:MR02} is a symmetric Markovian transition function. By density of continuous functions in $L^2(F_i,\mu_i)$ one obtains a family of operators $\{P_t^{F_i}\}_{t\geq 0}$ in $L^2(F_i,\mu_i)$ with the same properties as $\{T^{F_i}_t\}_{t\geq 0}$. Due to the strong continuity, there exists a unique Dirichlet form $(\mcE^{F_i},\mcF^{F_i})$ on $L^2(F_i,\mu_i)$ and a diffusion process $\{X^{F_i}_t\}_{t> 0}$ on $F_i$ whose associated heat kernel is $p_t^{F_i}(x,y)$, see e.g.~\cite[Lemma 1.3.1, Lemma 1.3.2, Theorem 1.3.1]{FOT94}.

\begin{remark}\label{R:HSFA02}
\begin{enumerate}[leftmargin=.25in, label=(\roman*),wide=0in]
\item Regarding $F_i$ as a cable system/quantum graph~\cite{BB04,Kuc04}, the Dirichlet form $(\mcE^{F_i},\mcF^{F_i})$ is given by
\begin{equation}\label{E:HSFA14}
\mcE^{F_i}(f,g)=\sum_{\alpha\in\mcA_i}\frac{1}{N_i}\int_0^{L_i}f_\alpha'(\theta)g_\alpha'(\theta)\;d\theta,\qquad f,g\in\mcF^{F_i},
\end{equation}
where the derivatives are understood in the weak sense, and 
\begin{equation*}
\mcF^{F_i}=\Big\{f\in\bigoplus_{\alpha\in\mcA_i}H^1([0,L_i],d\theta)~|~f\text{ satisfies the matching contidions~\eqref{E:matching}}\Big\}.
\end{equation*}
\item Since $F_i$ is compact, closed balls are also compact and the Dirichlet form $(\mcE^{F_i},\mcF^{F_i})$ regular. By~\cite[Theorem 10.4]{Kig12}, $p_t^{F_i}(x,y)$ is jointly continuous and hence uniformly continuous on $F_i\times F_i$. This fact can also be proved directly using the preceding results.
\end{enumerate}
\end{remark}

\section{Heat semigroup on diamond fractals}\label{sec:5}
Once the diffusion processes $\{X^{F_i}_t\}_{t\geq 0}$, $i\geq 1$, have been identified, the existence of a Markov process on $F_\infty$ follows from~\cite[Theorem 4.3]{BE04}. In this section our aim is to identify the heat kernel associated with that limiting process. Firstly, we establish the validity of Theorem~\ref{thm:MR03} by proving that the associated semigroup corresponds to a diffusion on $F_\infty$. Secondly, for the sake of completeness, we briefly %review some properties of the heat kernel that are known in the literature. 
discuss the Dirichlet form and properties of the heat kernel in the regular $2$-$2$ diamond fractal (i.e.\ $n_i=2=j_i$) which are known in the literature. 
Recall that in this section, the parameter sequences $\mcJ,\mcN$ under consideration satisfy the condition~\eqref{E:Ass_NJ}.

\begin{remark}\label{R:HSD01}
Assumption~\eqref{E:Ass_NJ} is equivalent to $\lim_{i\to\infty}N_iJ_ie^{-J_i^2t}=0$ for all $t>0$. Thus, Corollary~\ref{C:MR01} implies that for each fixed $t>0$, $p_t^{F_\infty}(x,y)$ is the uniform limit of uniformly continuous functions and therefore it is uniformly continuous on $F_\infty\times F_\infty$. In addition,~\eqref{E:Ass_NJ} implies that
\begin{equation*}
\sum_{i=1}^\infty N_iJ_i\big(1+(J_i^2t)^{-1}\big)e^{-J_i^2t}<\infty
\end{equation*}
for all $t>0$, hence $p_t^{F_\infty}(x,y)$ is jointly continuous on $(t_1,t_2)\times F_\infty\times F_\infty$ for any $0<t_1<t_2<\infty$. This can be seen by writing $p_t^{F_\infty}(x,y)$ as a telescopic series. 
\end{remark}
\subsection{Semigroup} 
For each $t>0$, any $f\in C(F_\infty)$ and $x\in F_\infty$ define
\begin{equation}\label{eq:HSD01}
T^{F_\infty}_tf(x):=\int_{F_\infty}p_t^{F_\infty}(x,y)f(y)\,\mu_{\infty}(dy),
\end{equation}
where $p_t^{F_\infty}(x,y)$ is given by~\eqref{eq:MR03}. Further, set $T^{F_\infty}_0=\id$. The proof that these operators define a suitable semigroup relies on the following lemma, that relates semigroups from different approximation levels, and ultimately the operator $T^{F_\infty}_t$ resembling~\cite[equation (4.2)]{BE04}.

\begin{lemma}\label{L:HSD01}
Let $\mcJ$ and $\mcN$ satisfy~\eqref{E:Ass_NJ}. For any $t>0$ and $i\geq 0$ it holds that
\begin{equation}\label{E:HSD02}
T_t^{F_i}\phi^*_{ik}=\phi^*_{ik}T_t^{F_k}
\end{equation}
for any $0\leq k<i$, and
\begin{equation}\label{E:HSD03}
T_{t}^{F_\infty}\Phi_{i}^*=\Phi_i^*T_t^{F_i}.
\end{equation}
\end{lemma}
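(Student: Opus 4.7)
My plan is to prove~\eqref{E:HSD02} by induction on $i \ge k$ using the decomposition in Lemma~\ref{L:HSFA01}, and then derive~\eqref{E:HSD03} by combining~\eqref{E:HSD02} with the uniform convergence $p_t^{F_j}(\Phi_j(\cdot),\Phi_j(\cdot)) \to p_t^{F_\infty}(\cdot,\cdot)$ asserted in Remark~\ref{R:HSD01} together with the push-forward identity $\mu_\infty \circ \Phi_j^{-1} = \mu_j$ from Definition~\ref{def:Pre02}.

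For~\eqref{E:HSD02}, the base case $i = k$ is trivial since $\phi_{kk} = \id$. For the inductive step, I would take $g \in C(F_k)$ and set $f := \phi_{ik}^* g$; using the factorization $\phi_{ik} = \phi_{(i-1)k} \circ \phi_i$ from Definition~\ref{D:DF.PL02}, the value of $f$ at $x \in F_i$ depends on $x$ only through $\phi_i(x)$. Consequently, for $x \in F_i \setminus B_i$ and $w \in [n_i]$ the value $f(\phi_i(x)w) = \phi_{(i-1)k}^* g(\phi_i(x))$ is independent of $w$, so that $\Pr_i f = f$ and $\Pr_i^\bot f \equiv 0$. Under the identification $L^2_{\sym}(F_i,\mu_i) \cap C(F_i) \cong C(F_{i-1})$ from Corollary~\ref{cor:FS.FA01}(i), $\Pr_i f$ corresponds to $\phi_{(i-1)k}^* g$, and Lemma~\ref{L:HSFA01} together with the induction hypothesis then gives
\begin{equation*}
T_t^{F_i} f(x) = T_t^{F_{i-1}}(\phi_{(i-1)k}^* g)(\phi_i(x)) = \phi_{(i-1)k}^*(T_t^{F_k} g)(\phi_i(x)) = \phi_{ik}^*(T_t^{F_k} g)(x).
\end{equation*}

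For~\eqref{E:HSD03}, I would fix $f \in C(F_i)$ and $x \in F_\infty$. Since $\mu_\infty(F_\infty) = 2\pi$ by Definition~\ref{def:DF.CS03} and Remark~\ref{R:DF.CS01}(i), and $f$ is bounded, the uniform convergence from Remark~\ref{R:HSD01} together with dominated convergence yields
\begin{equation*}
T_t^{F_\infty}(\Phi_i^* f)(x) = \lim_{j \to \infty} \int_{F_\infty} p_t^{F_j}(\Phi_j(x),\Phi_j(y)) f(\Phi_i(y))\, \mu_\infty(dy).
\end{equation*}
For $j \ge i$, Remark~\ref{rem:Pre01} gives $\Phi_i = \phi_{ji} \circ \Phi_j$, so $f \circ \Phi_i = (\phi_{ji}^* f) \circ \Phi_j$. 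Rewriting the integral via the push-forward $\mu_\infty \circ \Phi_j^{-1} = \mu_j$ and then invoking~\eqref{E:HSD02} will convert the $j$-th integral into
\begin{equation*}
T_t^{F_j}(\phi_{ji}^* f)(\Phi_j(x)) = \phi_{ji}^*(T_t^{F_i} f)(\Phi_j(x)) = T_t^{F_i} f(\Phi_i(x)),
\end{equation*}
which is independent of $j$; passing to the limit then yields~\eqref{E:HSD03}.

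The main bookkeeping difficulty will be carefully tracking the identification of symmetric functions on $F_i$ with functions on $F_{i-1}$ when applying Lemma~\ref{L:HSFA01}; once this is in place, both parts follow cleanly from the established structural results.
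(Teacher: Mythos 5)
Your proposal is correct and follows essentially the same route as the paper: part \eqref{E:HSD02} via the observation that $\phi_{ik}^*g$ is $\Pr_i$-invariant so that Lemma~\ref{L:HSFA01} reduces one level at a time (the paper iterates downward where you phrase it as induction), and part \eqref{E:HSD03} via uniform convergence of the kernels, the relation $\Phi_i=\phi_{ji}\circ\Phi_j$, the push-forward identity for $\mu_\infty$, and then \eqref{E:HSD02}. The only cosmetic difference is that you justify the interchange of limit and integral by dominated convergence on the finite measure space, where the paper appeals directly to uniform convergence.
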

\begin{proof}
We start by proving~\eqref{E:HSD02}. Let $f\in C(F_k)$ and notice that $\phi^*_{ik}f=\phi^*_i(\phi^*_{(i-1)k}f)$ belongs to $L^2_{\sym}(F_i)\cap C(F_i)$. By Corollary~\ref{cor:FS.FA01}, the function $\Pr_i \phi^*_{ik}f$ can thus be identified with $\phi^*_{(i-1)k}f\in C(F_{i-1})$. Applying Lemma~\ref{L:HSFA01}, for any $x\in F_i$ we have that
\begin{equation*}\label{eq:HSFA15}
T_t^{F_i}\phi^*_{ik}f(x)=T_t^{F_{i-1}}\big(\Pr_i\phi^*_{ik}f \big)(\phi_i(x))=T_t^{F_{i-1}}\big(\phi^*_{(i-1)k}f\big)(\phi_i(x)).
\end{equation*}
Iterating this argument with decreasing indices yields 
\begin{equation*}\label{eq:HSFA16}
T_t^{F_i}\phi^*_{ik}f(x)=T_t^{F_k}f(\phi_{k+1}{\circ}\cdots{\circ}\phi_i(x))=\phi^*_{ik}T_t^{F_k}f(x).
\end{equation*}
In order to show~\eqref{E:HSD03}, let us consider first $h\in C(F_i)$ and $x\in F_\infty$. Due to the uniform continuity of $p_t^{F_\infty}(x,y)$, Remark~\ref{rem:Pre01} and the construction of $\mu_\infty$ we have that
\begin{align*}
T_t^{F_\infty}(\Phi_i^*h)(x)&=\int_{F_\infty}p_t^{F_\infty}(x,y)h(\Phi_i(y))\,\mu_\infty(dy)\\
%&=\lim_{k\to\infty}\int_{F_\infty}p_t^{F_k}(\Phi_k(x),\Phi_k(y))h(\Phi_i(y))\,\mu_\infty(dy)\\
&=\lim_{k\to\infty}\int_{F_\infty}p_t^{F_k}(\Phi_k(x),\Phi_k(y))h(\phi_{ki}{\circ}\Phi_k(y))\,\mu_\infty(dy)\\
&=\lim_{k\to\infty}\int_{F_k}p_t^{F_k}(\Phi_k(x),z)h(\phi_{ki}(z))\,\mu_k(dz)=\lim_{k\to\infty}T_t^{F_k}(\phi_{ki}^*h)(\Phi_k(x)).
\end{align*}
Finally,~\eqref{E:HSD02} yields $T_t^{F_k}(\phi_{ki}^*h)(\Phi_k(x))=\phi_{ki}^*(T_t^{F_i}h)(\Phi_k(x))$ and therefore
\begin{align*}
T_t^{F_\infty}(\Phi_i^*h)(x)&=\lim_{k\to\infty}\phi_{ki}^*(T_t^{F_i}h)(\Phi_k(x))=\lim_{k\to\infty}T_t^{F_i}h(\phi_{ki}{\circ}\Phi_k(x))\\
&=\lim_{k\to\infty}T_t^{F_i}h(\Phi_i(x))=T_t^{F_i}h(\Phi_i(x))=\Phi^*_i(T_t^{F_i}h)(x).
\end{align*}
By virtue of Proposition~\ref{P:FS.D01}, the same holds by density for any $h\in C(F_\infty)$.
\end{proof}

\begin{proposition}\label{thm:HSD01}
The family of operators $\{T^{F_\infty}_t\}_{t\geq 0}$ is a strongly continuous Markov semigroup on $C(F_\infty)$ that satisfies the strong Feller property.
\end{proposition}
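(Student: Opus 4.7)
The plan is to leverage the intertwining identity $T_t^{F_\infty}\Phi_i^*=\Phi_i^*T_t^{F_i}$ of Lemma~\ref{L:HSD01} together with the density statement in Proposition~\ref{P:FS.D01} to transfer the semigroup properties already established on each $F_i$ in Proposition~\ref{P:HSFA01} up to the inverse limit. Positivity, mass conservation, and the strong Feller property will additionally be read off directly from the pointwise formula~\eqref{eq:MR03} together with the uniform continuity of $p_t^{F_\infty}$ recorded in Remark~\ref{R:HSD01}.

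For the semigroup property, given $f=\Phi_i^*h$ with $h\in C(F_i)$, two applications of Lemma~\ref{L:HSD01} yield
\begin{equation*}
T_s^{F_\infty}T_t^{F_\infty}\Phi_i^*h=T_s^{F_\infty}\Phi_i^*T_t^{F_i}h=\Phi_i^*T_s^{F_i}T_t^{F_i}h=\Phi_i^*T_{t+s}^{F_i}h=T_{t+s}^{F_\infty}\Phi_i^*h,
\end{equation*}
which extends to all of $C(F_\infty)$ by density and contractivity. Positivity is immediate from~\eqref{eq:MR03}: since $p_t^{F_i}\geq 0$ for each $i$ by Proposition~\ref{P:HSFA01}, the formula forces $p_t^{F_\infty}\geq 0$. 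Mass conservation follows because $\mathbf{1}=\Phi_i^*\mathbf{1}$, so $T_t^{F_\infty}\mathbf{1}=\Phi_i^*T_t^{F_i}\mathbf{1}=\mathbf{1}$. Sandwiching $-\|f\|_\infty\leq f\leq\|f\|_\infty$ and applying positivity and mass conservation yields the contractivity $\|T_t^{F_\infty}f\|_\infty\leq\|f\|_\infty$.

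For strong continuity, given $f\in C(F_\infty)$ and $\varepsilon>0$, Proposition~\ref{P:FS.D01} yields some $g=\Phi_i^*h$ with $\|f-g\|_\infty<\varepsilon/3$. Contractivity then implies
\begin{equation*}
\|T_t^{F_\infty}f-f\|_\infty\leq 2\|f-g\|_\infty+\|\Phi_i^*(T_t^{F_i}h-h)\|_\infty\leq \tfrac{2\varepsilon}{3}+\|T_t^{F_i}h-h\|_\infty,
\end{equation*}
and the last term drops below $\varepsilon/3$ once $t$ is small enough, by the strong continuity of $T_t^{F_i}$ granted by Proposition~\ref{P:HSFA01}. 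For the strong Feller property, given $f\in\mcB_b(F_\infty)$ and $x_1,x_2\in F_\infty$, the finiteness $\mu_\infty(F_\infty)<\infty$ gives
\begin{equation*}
|T_t^{F_\infty}f(x_1)-T_t^{F_\infty}f(x_2)|\leq\mu_\infty(F_\infty)\|f\|_\infty\sup_{y\in F_\infty}|p_t^{F_\infty}(x_1,y)-p_t^{F_\infty}(x_2,y)|,
\end{equation*}
which tends to $0$ as $x_2\to x_1$ by uniform continuity of $p_t^{F_\infty}$ on $F_\infty\times F_\infty$ (Remark~\ref{R:HSD01}).

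The only genuine subtlety I anticipate is the order of limits in the strong-continuity step: the modulus of continuity of $t\mapsto T_t^{F_i}h$ provided by Proposition~\ref{P:HSFA01} may well deteriorate as $i\to\infty$, but fixing $g=\Phi_i^*h$ (and thus $i$) before letting $t\to 0^+$ decouples the two limits and avoids any issue. All remaining assertions follow cleanly from either the intertwining of Lemma~\ref{L:HSD01} or the explicit kernel formula~\eqref{eq:MR03} combined with Assumption~\eqref{E:Ass_NJ} via Remark~\ref{R:HSD01}, so no additional analytic input beyond what is already in place should be required.
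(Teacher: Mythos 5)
Your proposal is correct and follows essentially the same route as the paper: the intertwining identity of Lemma~\ref{L:HSD01} plus the density of $\bigcup_i\Phi_i^*C(F_i)$ transfers the semigroup, Markov, and strong continuity properties from Proposition~\ref{P:HSFA01}, and the strong Feller property comes from the continuity of $p_t^{F_\infty}$. The only cosmetic difference is in that last step, where the paper uses Cauchy--Schwarz and the identity $\int p_t^{F_\infty}(x,z)p_t^{F_\infty}(y,z)\,\mu_\infty(dz)=p_{2t}^{F_\infty}(x,y)$ together with joint continuity, whereas you bound directly by $\mu_\infty(F_\infty)\|f\|_\infty$ times the uniform modulus of continuity of the kernel; both are valid.
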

\begin{proof}
Let us first prove the semigroup property. From Proposition~\ref{L:HSD01} and Theorem~\ref{P:HSFA01} we have that
\begin{equation*}\label{eq:HSD06}
T_{t+s}^{F_\infty}\Phi^*_i=\Phi^*_iT_{t+s}^{F_i}=\Phi^*_i(T_t^{F_i}T_s^{F_i})=T_t^{F_\infty}(\Phi^*_iT_s^{F_i})=T_t^{F_\infty}T_s^{F_\infty}\Phi_i^*
\end{equation*}
holds for any $i\geq 0$ and $t,s> 0$. Hence, $T_{t+s}^{F_\infty}f=T_t^{F_\infty}T_s^{F_\infty}f$ holds for any $f\in \bigcup_{i\geq 0}\Phi_i^*C(F_i)$ and by density for any $f\in C(F_\infty)$. Applying the same arguments, i.e.\ density, Proposition~\ref{L:HSD01} and Theorem~\ref{P:HSFA01}, the strong continuity, contractivity and the Feller property of the semigroup $\{T_t^{F_\infty}\}_{t\geq 0}$ is deduced from the corresponding property of the approximations. 

Finally, the strong Feller property of the semigroup $\{T^{F_\infty}_t\}_{t\geq 0}$ is a consequence of the joint continuity of $p_t^{F_\infty}(x,y)$, c.f.\ Remark~\ref{R:HSFA02} (ii): for any $f\in\mcB_b(F_\infty)$ and $x,y\in F_\infty$,
\begin{align*}
|T^{F_\infty}_tf(x)-T^{F_\infty}_tf(y)|&\leq \int_{F_\infty}|p^{F_\infty}_t(x,z)-p^{F_\infty}_t(y,z)|\,f(z)\,\mu_\infty(dz)\\
&\leq \bigg(\int_{F_\infty}|p_t^{F_\infty}(x,z)-p_t^{F_\infty}(y,z)|^2\mu_\infty(dz)\bigg)^{1/2}\|f\|_{L^2(F_\infty,\mu_\infty)}\\
&=(p_{2t}^{F_\infty}(x,x)-p_{2t}^{F_\infty}(x,y)+p_{2t}^{F_\infty}(x,y)-p_{2t}^{F_\infty}(y,y))^{1/2}\|f\|_{L^2(F_\infty,\mu_\infty)}
\end{align*}
and therefore $T^{F_\infty}_tf\in C(F_\infty)$.
\end{proof}

By density of continuous functions in $L^2(F_\infty,\mu_\infty)$ one defines the family of operators $\{P_t^{F_\infty}\}_{t\geq 0}$ on $L^2(F_\infty,\mu_\infty)$ with the same properties as $\{T^{F_\infty}_t\}_{t\geq 0}$.
\begin{corollary}\label{C:HSD01}
The family of operators $\{P^{F_\infty}_t\}_{t\geq 0}$ is a strongly continuous Markov semigroup on $L^2(F_\infty,\mu_\infty)$ that satisfies the strong Feller property. 
\end{corollary}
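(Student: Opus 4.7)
The plan is to carry out the same extension-by-density maneuver that was already used on the finite approximants (in the paragraph following Remark~\ref{R:HSFA02}), but now at the level of $F_\infty$. Since $F_\infty$ is a compact Hausdorff space and $\mu_\infty$ is a finite Borel regular measure on it (Definition~\ref{def:Pre02}), $C(F_\infty)$ is dense in $L^2(F_\infty,\mu_\infty)$, so it suffices to (i) show that $T_t^{F_\infty}$ is an $L^2$-contraction on $C(F_\infty)$, (ii) define $P_t^{F_\infty}$ as its unique bounded extension, and (iii) lift each property of $\{T_t^{F_\infty}\}$ from Proposition~\ref{thm:HSD01} to $\{P_t^{F_\infty}\}$ by continuity.

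For step (i), the kernel $p_t^{F_\infty}$ is non-negative (as a uniform limit of non-negative $p_t^{F_i}$, which are Markov kernels by Proposition~\ref{P:HSFA01}) and symmetric, and by Proposition~\ref{thm:HSD01} it is conservative: $\int_{F_\infty} p_t^{F_\infty}(x,y)\,\mu_\infty(dy)=1$ for every $x$. Applying Cauchy--Schwarz with respect to the probability measure $p_t^{F_\infty}(x,\cdot)\,\mu_\infty$ yields $|T_t^{F_\infty}f(x)|^2\leq \int p_t^{F_\infty}(x,y)|f(y)|^2\,\mu_\infty(dy)$; integrating in $x$ and using symmetry and conservativeness once more gives $\|T_t^{F_\infty}f\|_{L^2}\leq\|f\|_{L^2}$. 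Hence $T_t^{F_\infty}$ extends uniquely to a bounded operator $P_t^{F_\infty}$ on $L^2(F_\infty,\mu_\infty)$.

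The semigroup identity $P_t^{F_\infty}P_s^{F_\infty}=P_{t+s}^{F_\infty}$ then follows from the corresponding identity for $T^{F_\infty}$ on $C(F_\infty)$ (established in Proposition~\ref{thm:HSD01}), together with the $L^2$-continuity of $P_t^{F_\infty}$ and density. For Markovianity, positivity-preservation on $C(F_\infty)$ passes to $L^2$ because non-negativity is closed under $L^2$-limits, and $P_t^{F_\infty}\mathbf{1}=\mathbf{1}$ is already known on $C(F_\infty)$ and holds a.e.\ on $L^2$. Strong continuity at $t=0$ is the standard three-$\varepsilon$ argument: for $f\in L^2(F_\infty,\mu_\infty)$ and $g\in C(F_\infty)$ with $\|f-g\|_{L^2}<\varepsilon$, contractivity gives $\|P_t^{F_\infty}f-f\|_{L^2}\leq 2\varepsilon+\|T_t^{F_\infty}g-g\|_{L^2}$, and the last term tends to zero as $t\to 0$ by the uniform continuity part of Proposition~\ref{thm:HSD01} combined with $\mu_\infty(F_\infty)<\infty$.

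Finally, for the strong Feller property one uses the joint continuity of $p_t^{F_\infty}$ guaranteed by Remark~\ref{R:HSD01} under assumption~\eqref{E:Ass_NJ}: for any bounded Borel $f\in L^2(F_\infty,\mu_\infty)\cap\mcB_b(F_\infty)$, the integral $x\mapsto\int_{F_\infty}p_t^{F_\infty}(x,y)f(y)\,\mu_\infty(dy)$ is continuous on $F_\infty$ by dominated convergence, using that $p_t^{F_\infty}(x,\cdot)$ is uniformly bounded in $x$ (a consequence of compactness and joint continuity) and that $p_t^{F_\infty}(x_n,y)\to p_t^{F_\infty}(x,y)$ for every $y$ whenever $x_n\to x$. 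The only point requiring any care is the $L^2$-contractivity in step (i), where one must remember to use both the symmetry \emph{and} the conservativeness of the kernel — everything else is a transcription of Proposition~\ref{thm:HSD01} through a dense subspace.
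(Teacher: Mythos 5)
Your proposal is correct and follows the same route as the paper, which simply extends $\{T_t^{F_\infty}\}_{t\geq 0}$ from $C(F_\infty)$ to $L^2(F_\infty,\mu_\infty)$ by density and transfers the properties established in Proposition~\ref{thm:HSD01}; the paper leaves these routine verifications implicit, and your Cauchy--Schwarz argument for the $L^2$-contraction bound (using symmetry and conservativeness) is exactly the standard way to justify that extension.
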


\begin{remark}\label{R:HSD02}
The Dirichlet form associated with $\{P^{F_\infty}_t\}_{t\geq 0}$ is given by
\begin{align*}
&\mcE^{F_\infty}(f,f)=\lim_{t\to 0}\frac{1}{t}\langle f-P_t^{F_\infty}f,f\rangle_{L^2(F_\infty,\mu_\infty)}\\
&\mcF^{F_\infty}=\{f\in L^2(F_\infty,\mu_\infty)~|~\mcE^{F_\infty}(f,f) \text{ exists and is finite}\},
\end{align*}
see e.\ g.~\cite[Definition 1.7.1]{BGL14}. Using density, Proposition~\ref{P:FS.D01} and Lemma~\ref{L:HSD01}, this corresponds with the definition introduced in~\cite[Definition 4.1]{ST12} in the context of inverse limit spaces.
\end{remark}

\subsection{Standard 2-2 diamond fractal. Dirichlet form and heat kernel}
By~\cite[Theorem 1.4.3]{FOT94} the Markov process $X^{F_\infty}_t$ associated with the semigroup $\{P^{F_\infty}_t\}_{t\geq 0}$ is a Hunt process. The corresponding Dirichlet form $(\mcE^{F_\infty},\mcF^{F_\infty})$ was obtained in~\cite{HK10} in the standard $2$-$2$ diamond fractal following a different approach. Throughout this paragraph we thus set $j_i=n_i=2$ for all $i\geq 1$, so that $F_\infty$ coincides with the diamond fractal discussed in~\cite{HK10}. There, $F_\infty$ is approximated by a sequence of finite sets that correspond to $\{B_i\}_{i\geq 0}$ from Definition~\ref{def:DF.PL01}. We outline next the arguments leading to the fact that the Dirichlet form $(\mcE^{F_\infty},\mcF^{F_\infty})$ associated with the semigroup $\{P_t^{F_\infty}\}_{t\geq 0}$ coincides with the Dirichlet form $(\mcE,\mcF)$ constructed there.

\begin{definition}\label{def:DFHK01}
A function $h\in C(F_\infty)$ is called $i$-harmonic for some $i\geq 0$ if
\begin{itemize}[leftmargin=.3in]
\item[(i)] $h_{|_{F_i}}\in C(F_i)$ is the linear interpolation of $h_{|_{B_i}}\colon B_i\to\mbbR$,
\item[(ii)] $h=\Phi^*_ih_{|_{F_i}}$.
\end{itemize}
Further, define $\mcH_*=\{h\in C(F_\infty)~|~h\text{ is }i\text{-harmonic for some }i\geq 0\}$.
\end{definition}
By virtue of the maximum principle, $\mcH_*$ is a dense subspace of $C(F_\infty)$ and therefore a core of $(\mcE^{F_\infty},\mcF^{F_\infty})$: Due to the definition of the Dirichlet form given in Remark~\ref{R:HSD02}, it follows from Proposition~\ref{P:FS.D01} and Lemma~\ref{L:HSD01} that the completion of $\mcH_*$ with respect to $(\mcE^{F_\infty}(\cdot,\cdot)+\langle\cdot,\cdot\rangle_{L^2(F_\infty,\mu_\infty)})^{1/2}$ is $\mcF^{F_\infty}$.

On the one hand, in view of Remark~\ref{rem:Pre01}, for any $f\in\Phi_i^*C(F_i)$ there exists $f_i\in C(F_i)$ such that
\begin{equation}\label{eq:DFHK01}
f(x)=f_i(\Phi_i(x))=f_i{\circ}\phi_{ki}(\Phi_k(x))\in\Phi_k^*C(F_k)
\end{equation}
for any $k\geq i$. Thus, if $h\in\mcH_*$ is $i$-harmonic, Definition~\ref{def:DFHK01}, Remark~\ref{R:HSD02} and~\eqref{eq:DFHK01} yield
\begin{equation*}\label{eq:DFHK02}
%\mcE_i(h_{|_{B_i}},h_{|_{B_i}})=\mcE^{F_i}(h_{|_{F_i}},h_{|_{F_i}})=
\mcE^{F_\infty}(h,h)=\lim_{k\to\infty}\mcE^{F_k}(h_{|_{F_i}}{\circ}\phi_{ki},h_{|_{F_i}}{\circ}\phi_{ki})=\mcE^{F_i}(h_{|_{F_i}},h_{|_{F_i}})
\end{equation*}
with $\mcE^{F_i}$ as in~\eqref{E:HSFA14}. On the other hand, since $h_{|_{F_i}}$ is given by linear interpolation of $h_{|_{B_i}}$, also $h_{|_{F_i}}{\circ}\phi_{ki}$ is linear interpolation of $h_{|_{B_i}}{\circ}\phi_{ki}=h_{|_{B_k}}$ for any $k\geq i$. Thus,
\begin{equation*}\label{eq:DFHK03}
\mcE^{F_k}(h_{|_{B_i}}{\circ}\phi_{ki},h_{|_{B_i}}{\circ}\phi_{ki})=\mcE_k(h_{|_{B_k}},h_{|_{B_k}})
\end{equation*}
for any $k\geq i$, where $(\mcE_k,\ell(B_k))$ is the bilinear form defined in~\cite[Section 4]{HK10} and $\ell(B_k)=\{f\colon B_k\to\mbbR\}$. Hence, for any $h\in\mcH_*$
\begin{equation*}\label{eq:DFHK04}
\mcE^{F_\infty}(h,h)=\lim_{k\to\infty}\mcE^{F_k}(h_{|_{F_i}}{\circ}\phi_{ki},h_{|_{F_i}}{\circ}\phi_{ki})=\lim_{k\to\infty}\mcE_k(h_{|_{B_k}},h_{|_{B_k}})=\mcE(h,h).
\end{equation*}
Using Lemma~\ref{L:HSD01}, one can now show that both forms $\mcE^{F_\infty}$ and $\mcE$ coincide.

In~\cite[Section 4]{HK10}, a series of properties of the diffusion process $\{X^{F_\infty}_t\}_{t\geq 0}$ are shown, for instance Poincar\'e inequality, ultracontractivity and exit times, as well as the estimates
\begin{align*}
0<~p_t^{F_\infty}(x,y)\leq \frac{c_1}{t}\exp\bigg(-c_2\frac{d(x,y)^2}{t}\bigg),\qquad p_t^{F_\infty}(x,x)\geq \frac{c_3}{V(x,c_4\sqrt{t})}
\end{align*}
for all $x,y\in F_\infty$ and $t\in (0,1)$. On the negative side, elliptic Harnack inequality is not supported, see~\cite[Remark 4.15]{HK10}. 

We would like to emphasize that these results were obtained for the regular $2$-$2$ diamond and relied strongly on the self-similarity of the space. Even in this case, further inequalities of interest such as (weak) Bakry-\'Emery gradient estimates remain so far unknown. We hope that %and, in view of~\cite{BK17}, we hope that 
an explicit expression of the heat kernel can shed some light on these questions and lead to better estimates.
\section*{Acknowledgments} The author is greatly thankful to F.\ Baudoin, G.\ Dunne, M.\ Gordina and A.\ Teplyaev for very valuable discussions and comments.
\bibliographystyle{amsplain}
\bibliography{HKDF_Refs}
\end{document}